\newcommand{\seq}{\subseteq}
\newcommand{\stm}{\setminus}
\newcommand{\alp}{\alpha}
\newcommand{\del}{\delta}
\newcommand{\eps}{\varepsilon}
\newcommand{\lam}{\lambda}
\renewcommand{\phi}{\varphi}
\newcommand{\sig}{\sigma}
\newcommand{\zet}{\zeta}
\newcommand{\C}{{\mathbb C}}
\newcommand{\F}{{\mathbb F}}
\newcommand{\K}{{\mathbb K}}
\newcommand{\Q}{{\mathbb Q}}
\newcommand{\R}{{\mathbb R}}
\newcommand{\Z}{{\mathbb Z}}
\newcommand{\cO}{{\mathcal O}}
\newcommand{\cR}{{\mathcal R}}
\newcommand{\cN}{{\mathcal N}}
\newcommand{\fI}{{\mathfrak I}}
\newcommand{\fq}{{\mathfrak q}}
\newcommand{\fp}{{\mathfrak p}}
\newcommand{\fP}{{\mathfrak P}}
\renewcommand{\>}{\rangle}
\renewcommand{\(}{\left(}
\renewcommand{\)}{\right)}
\DeclareMathOperator{\tr}{tr}
\DeclareMathOperator{\ord}{ord}
\DeclareMathOperator{\lcm}{lcm}
\DeclareMathOperator{\Gal}{Gal}
\renewcommand{\=}{\stackrel{!}{=}}
\newcommand{\Krt}{{\K^{\raisebox{1.5pt}{$\scriptscriptstyle\surd$}}}}
\newcommand{\refl}[1]{\ref{l:#1}}
\newcommand{\refm}[1]{\ref{m:#1}}
\newcommand{\reft}[1]{\ref{t:#1}}
\newcommand{\refc}[1]{\ref{c:#1}}
\newcommand{\refp}[1]{\ref{p:#1}}
\newcommand{\refs}[1]{\ref{s:#1}}
\newcommand{\refb}[1]{\cite{b:#1}}
\newcommand{\refe}[1]{\eqref{e:#1}}
\newtheorem{claim}{Claim}
\newtheorem{lemma}{Lemma}
\newtheorem{theorem}{Theorem}
\newtheorem{corollary}{Corollary}
\newtheorem{proposition}{Proposition}
\title{Quadratic residues and difference sets}
\author{Vsevolod F. Lev}
\address{Department of Mathematics, The university of Haifa at Oranim,
  Tivon 36006, Israel}
\email{math@haifa.ac.il}
\author{Jack Sonn}
\address{Department of Mathematics,
  Technion --– Israel Institute of Technology, Haifa 32000, Israel}
\email{sonn@math.technion.ac.il}
\keywords{Sumsets; Difference sets; Quadratic Residues.}
\subjclass[2010]%
  {Primary: 11B13;   
   Secondary: 11A15, 
              11B34, 
              11P70, 
              11T21, 
              05B10} 
\begin{document}
\baselineskip = 16pt

\begin{abstract}
It has been conjectured by S\'ark\"ozy that with finitely many
exceptions, the set of quadratic residues modulo a prime $p$ cannot be
represented as a sumset $\{a+b\colon a\in A, b\in B\}$ with non-singleton
sets $A,B\seq\F_p$. The case $A=B$ of this conjecture has been recently
established by Shkredov. The analogous problem for differences remains
open: is it true that for all sufficiently large primes $p$, the set of
quadratic residues modulo $p$ is not of the form
 $\{a'-a''\colon a',a''\in A,\,a'\ne a''\}$ with $A\seq\F_p$?

We attack here a presumably more tractable variant of this problem, which is
to show that there is no $A\seq\F_p$ such that every quadratic residue has a
\emph{unique} representation as $a'-a''$ with $a',a''\in A$, and no
non-residue is represented in this form. We give a number of necessary
conditions for the existence of such $A$, involving for the most part the
behavior of primes dividing $p-1$. These conditions enable us to rule out all
primes $p$ in the range $13<p<10^{18}$ (the primes $p=5$ and $p=13$ being
conjecturally the only exceptions).

\end{abstract}

\maketitle

\section{Background and Motivation}\label{s:history}

S\'ark\"ozy \refb{sa} conjectured that the set $\cR_p$ of all quadratic
residues modulo a prime $p$ is not representable as a sumset
 $\{a+b\colon a\in A,\,b\in B\}$, whenever $A,B\seq\F_p$ satisfy
$\min\{|A|,|B|\}>1$. Shkredov \refb{sh} has recently established the
particular case $B=A$ of this conjecture, showing that
 $\{a'+a''\colon a',a''\in A\}\ne\cR_p$, except if $p=3$ and $A=\{2\}$. He
has also proved that $\cR_p$ cannot be represented as a \emph{restricted
sumset}: $\{a'+a''\colon a',a''\in A,\,a'\ne a''\}\ne\cR_p$ for $A\seq\F_p$,
with several exceptions for $p\le 13$.

The argument of \refb{sh} does not seem to extend to handle differences
(instead of sums) and to show that
\begin{equation}\label{e:difnotres}
  \{a'-a''\colon a',a''\in A,\,a'\ne a''\}\ne\cR_p,\quad A\seq\F_p.
\end{equation}
We notice that for equality to hold in \refe{difnotres}, one needs to have
$2\binom{|A|}2\ge |\cR_p|$, which readily yields
\begin{equation}\label{e:Aislarge}
  |A| > \sqrt{p/2}.
\end{equation}
At the same time, there is a famous, long-standing conjecture saying that for
every $\eps>0$, if $A\seq\F_p$ has the property that $a'-a''\in\cR_p$ for all
$a',a''\in A$ with $a'\ne a''$, then
\begin{equation}\label{e:Aissmall}
  |A| < p^\eps
\end{equation}
provided that $p$ is sufficiently large. (We refer the reader to \refb{sh}
for several more related conjectures and discussion.) Combining
\refe{Aislarge} and \refe{Aissmall}, one immediately derives that
\refe{difnotres} is true for all but finitely many primes $p$.

Unfortunately, the conjecture just mentioned is presently out of reach, and
neither could we prove \refe{difnotres}. As a step in this direction, we
investigate the following, presumably easier, problem:

\smallskip
\begin{quotation}
\emph{Does there exist a subset $A\seq\F_p$ such that the differences
$a'-a''$ with $a',a''\in A$, $a'\ne a''$, list all quadratic residues modulo
$p$, and every quadratic residue is listed exactly once?}
\end{quotation}

Even this question does not eventually receive a complete answer. However, we
were able to establish a number of necessary conditions, and use them to show
that in the range $13<p<10^{18}$, there are no ``exceptional primes''. This
makes it extremely plausible to conjecture that no such primes exist at all,
with just two exceptions $p=5$ and $p=13$ addressed below.

\section{Summary of Results}\label{s:summary}

In this section we introduce basic notation and present our results. Most of
the proofs are postponed to subsequent sections; see the ``proof locator'' at
the very end of the section.

Recall, that for a prime $p$ we denote by $\F_p$ the finite field of order
$p$, and by $\cR_p$ the set of all quadratic residues modulo $p$. We also
denote by $\cN_p$ the set of all quadratic non-residues modulo $p$, to have
the decomposition $\F_p=\cR_p\cup\cN_p\cup\{0\}$.

For subsets $A$ and $S$ of an additively written abelian group, the notation
$A-A\=S$ will indicate that every element of $S$ has a unique representation
as a difference of two elements of $A$ and, moreover, every such non-zero
difference belongs to $S$. (In our context, the underlying group is always
the additive group of the field $\F_p$, and $S$ is one of the sets $\cR_p$
and $\cN_p$.) Our goal is thus to show that, with few exceptions,
\begin{equation}\label{e:neverholds}
  A-A \= \cR_p
\end{equation}
does not hold.

One immediate observation is that for \refe{neverholds} to hold, letting
$n:=|A|$, one needs to have $n(n-1)=\frac{p-1}2$; that is, $p=2n(n-1)+1$. As
a result, $p\equiv 1\pmod 4$ --- a conclusion which also follows by observing
that the set of all differences $a'-a''$ is symmetric, whence $\cR_p$ must be
symmetric too.

Experimenting with small values of $p$, one finds two remarkable
counterexamples to \refe{neverholds}: namely, the sets $A_5:=\{2,3\}\seq\F_5$
and $A_{13}:=\{2,5,6\}\seq\F_{13}$. Clearly, all affinely equivalent sets of
the form $\{\mu a+c\colon a\in A_p\}$, where $\mu\in\cR_p$ and $c\in\F_p$ are
fixed parameters (and $p\in\{5,13\}$) work too, and it is not difficult to
see that no other sets $A$ satisfying \refe{neverholds} exist for $p\le 13$;
indeed, we believe that there are no more such sets at all.

What makes the two sets $A_5$ and $A_{13}$ special? An interesting feature
they have in common is that both of them are cosets of a subgroup of the
multiplicative group of the corresponding field; indeed, $A_{13}$ is a coset
of the subgroup $\{1,3,9\}<\F_{13}^\times$, while $A_5$ is a coset of the
subgroup $\{1,4\}<\F_5^\times$. In addition, $A_5$ is affinely equivalent to
the set $\{0,1\}$, which is a union of $0$ and a subgroup of $\F_5^\times$.
Our first two theorems show that constructions of this sort do not work for
$p>13$.

\begin{theorem}\label{t:NoGP1}
For a prime $p>13$, there is no coset $A=gH$, with $H<\F_p^\times$ and
$g\in\F_p^\times$, such that $A-A\=\cR_p$.
\end{theorem}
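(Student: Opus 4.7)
The plan is to convert the hypothesis $A-A \doteq \cR_p$ into strong arithmetic conditions on $p$ and $n := |A|$, and then rule these out for $p > 13$.

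First I would extract structural information. Uniqueness forces $n(n-1) = (p-1)/2$, so $p = 2n(n-1) + 1$ and $p \equiv 1 \pmod 4$. Writing $A = gH$, the set $A - A = g(H-H)$ is $H$-invariant under multiplication, so $(H-H)\setminus\{0\}$ is a union of $H$-cosets in $\F_p^\times$; for $g^{-1}\cR_p$ to be such a union, every $H$-coset meeting $\cR_p$ must lie entirely in $\cR_p$, forcing $H \seq \cR_p$. After absorbing a quadratic residue into $g$, I may assume $(H-H)\setminus\{0\} \in \{\cR_p, \cN_p\}$, equivalently $\chi(h-1) = \eps$ is a fixed sign $\pm 1$ for all $h \in H \setminus \{1\}$. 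The factorization $(h-1)(h+1) = h^2 - 1$ (with $h^2 \in H \setminus \{1\}$ for $h \in H \setminus \{\pm 1\}$) yields the companion relation $\chi(h+1) = 1$ for $h \in H \setminus \{\pm 1\}$.

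Next I would combine two complementary calculations. Evaluating $\prod_{h \in H}(X-h) = X^n - 1$ at $X = 1$ gives $\prod_{h \neq 1}(h-1) = (-1)^{n-1} n$; meanwhile the bijection $h \mapsto (h-1)H$ sends $H \setminus \{1\}$ onto the $n-1$ cosets of $H$ inside $g^{-1}\cR_p$, so the product of those cosets in the cyclic quotient $\F_p^\times/H \cong \Z/2(n-1)\Z$ (easily computed in the additive model) pins down the coset of the polynomial product and yields $-n \in H$, or $n \in H$ in the subsidiary Case 2 with $n$ odd; in every surviving case $n^{2n} \equiv 1 \pmod p$. Separately, summing Euler's criterion $(h-1)^{(p-1)/2} \equiv \chi(h-1) \pmod p$ over $H$, expanding binomially, and using $\sum_{h \in H} h^j = n \cdot \mathbf{1}[n \mid j]$ with the roots-of-unity trick, reduces the left-hand side to the short character sum $\sum_l \chi(1 + \zeta h_0^l)$; for $n$ even the relation $\chi(h+1) = 1$ collapses this to $\chi(2) + n - 2$, forcing $\chi(2) = 1$ and $\eps = 1$ as soon as $n \geq 4$ and ruling out Case 2 in the even case.

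The final ingredient is the identity $(2n-1)^2 \equiv -1 \pmod p$, which follows by direct computation from $p = 2n^2 - 2n + 1$: thus $i := 2n - 1$ is a primitive fourth root of unity in $\F_p^\times$, and $n$ is pinned down as $(1 + i)/2$. Using $(1+i)^2 = 2i$, the order condition $n^{2n} \equiv 1$ rewrites as $(i/2)^n \equiv 1$, equivalently $2^n \equiv i^n \pmod p$, which splits according to $n \bmod 4$ into explicit congruences of the form $2^n \equiv 1,\ -1,\ \pm i \pmod p$; combined with the residuacity constraint $\chi(2) = 1$ (for even $n$) and its analogues in the odd case, these become incompatible for every $n \geq 4$. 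The main obstacle is this last step: rigorously eliminating each $n \geq 4$ uniformly requires a short case split on $n \bmod 8$ together with a lower bound on the multiplicative order of $2$ in $\F_p^\times$ coming from the explicit form $p = 2n^2 - 2n + 1$.
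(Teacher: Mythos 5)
Your reduction is fine as far as it goes: passing from $A=gH$ to $(H-H)\stm\{0\}\in\{\cR_p,\cN_p\}$, the counting $n(n-1)=\frac{p-1}2$, the fixed sign $\chi_p(h-1)=\eps$, and the companion relation $\chi_p(h_1+h_2)=1$ (the paper's \refe{2HinQ}) are all correct, and the observation that $h\mapsto(h-1)H$ bijects $H\stm\{1\}$ with the $H$-cosets filling $g^{-1}\cR_p$ is a genuinely nice structural remark. The problem is that everything after that reduces the theorem to a family of multiplicative congruences — $-n\in H$ or $n\in H$, hence $n^{2n}\equiv 1\pmod p$, then via $i:=2n-1$, $i^2\equiv-1$ and $n=\frac{1+i}2$, a condition of the shape $2^n\equiv i^n\pmod p$ — and you have no way to refute these for all $n\ge 4$. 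Such a refutation would amount to a lower bound of order roughly $4n\approx 2\sqrt{2p}$ on $\ord_p(2)$ for every prime $p=2n^2-2n+1$, which is hopeless: the only unconditional lower bound on $\ord_p(2)$ is logarithmic in $p$, and nothing in the structure $p=2n^2-2n+1$ prevents $\ord_p(2)$ from dividing $2n$ or $4n$ (indeed for $n=5$, $p=41$, one has $2^{2n}\equiv-1$, so your order condition is actually satisfied there and only the auxiliary condition $n^{2n}\equiv1$ saves you). You flag this yourself as ``the main obstacle,'' but it is not a technical loose end — it is the entire content of the theorem in your formulation, and no case split on $n\bmod 8$ will supply it. In addition, the intermediate step where summing Euler's criterion over $H$ is said to ``collapse'' to $\chi_p(2)+n-2$ and to eliminate the non-residue case for even $n$ is too vague to check and would need a real argument.

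For contrast, the paper takes a completely different, analytic route and never touches $\ord_p(2)$: after using Lemma~\refl{MAodd} to get $n$ odd (so $H\cap(-H)=\emptyset$) and the same relation \refe{2HinQ}, it forms $\sig(x)=\sum_{h\in H}(\chi_p(x+h)\pm\chi_p(x-h))$, lower-bounds $\sig(x)\ge 2n-3$ for $x\in H$, and compares $\sum_{x\in H\cup(-H)}\sig^2(x)$ with the exact evaluation of $\sum_{x\in\F_p}\sig^2(x)$ via the identity \refe{sum-chi}; this yields $(n-2)^2\le 0$ (residue case) or $n\le 4$ (non-residue case), finishing the proof. If you want to salvage your approach, the congruence conditions you derive could serve as supplementary necessary conditions (in the spirit of Corollary~\refc{cyclotomic}), but they cannot by themselves prove the theorem; you need a second-moment or equidistribution input of the above kind to close the argument.
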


\begin{theorem}\label{t:NoGP2}
For a prime $p>5$, there is no coset $gH$, with $H<\F_p^\times$ and
$g\in\F_p^\times$, such that, letting $A:=gH\cup\{0\}$, we have $A-A\=\cR_p$.
\end{theorem}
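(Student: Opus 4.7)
The plan is to normalize the coset, extract a rigid bijection between $H$ and the non-trivial cosets of $H$ in $\cR_p$, and then apply a cyclotomic product identity to derive an arithmetic condition that is incompatible with $p>5$.

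First I reduce to $g=1$. The condition $A-A\=\cR_p$ forces $A\stm\{0\}\seq\cR_p$ (each $a\in A\stm\{0\}$ equals $a-0$, a non-zero difference), so $gH\seq\cR_p$ and in particular $g=g\cdot 1\in\cR_p$; multiplying $A$ by $g^{-1}\in\cR_p$ fixes $\cR_p$ and lets us assume $A=H\cup\{0\}$. Write $n=|A|$ and $h=|H|=n-1$; the uniqueness count $|A|(|A|-1)=|\cR_p|$ gives $p=2n(n-1)+1$. If $-1$ were in $H$ then for any $h'\in H$ the difference $-h'$ would admit the distinct representations $0-h'$ and $(-h')-0$, violating uniqueness; so $-1\notin H$, $h$ is odd, and $n$ is even. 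Moreover the three sets $H-H$, $H$, and $-H$ are pairwise disjoint and partition $\cR_p$.

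Because $H$ acts freely on the $(n-1)(n-2)$ ordered pairs in $H\times H$ with distinct entries, each $H$-orbit contains a unique representative $(1,h^*)$ with $h^*\in H\stm\{1\}$. The $H$-equivariant difference map then yields a bijection
\[
  \phi\colon H\stm\{1\}\longrightarrow (\cR_p/H)\stm\{H,\ -H\},\qquad h^*\longmapsto (1-h^*)\,H.
\]
Choosing a generator $\alp$ of $H$, the cyclotomic identity $\prod_{i=1}^{h-1}(1-\alp^i)=h=n-1$ in $\F_p$ reads modulo $H$ as
\[
  (n-1)\,H \;=\; \prod_{C\in (\cR_p/H)\stm\{H,\,-H\}} C
\]
inside $\cR_p/H$. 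This quotient is cyclic of \emph{even} order $n$ with unique element of order two $-H$, and the product of all its elements equals $-H$; excising the two cosets $H$ and $-H$ therefore leaves the identity $H$. So $n-1\in H$, equivalently $(n-1)^{n-1}\equiv 1\pmod p$.

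The remaining step, pushing $n-1\in H$ to an outright contradiction for every $p>5$, is the main obstacle. The identity $p=n^2+(n-1)^2$ gives $(n-1)^2\equiv -n^2\pmod p$, so $(n-1)^{n-1}\equiv 1$ is equivalent to $n^{n-1}$ being a primitive fourth root of unity in $\F_p$, which on its own is not impossible. To close the argument I would apply the analogous cyclotomic identity $\prod_{j=1}^{d-1}(1-\alp^{jh/d})=d$ at every divisor $d\mid h$ with $d>1$: each such $d$ must then lie in $\cR_p$, and its class $[d]\in\cR_p/H$ is pinned down by an explicit congruence coming from that identity together with the symmetry $\phi(h-i)\equiv\phi(i)+n/2$ obtained from $1-\alp^{h-i}=-\alp^{-i}(1-\alp^i)$. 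The conjunction of all these congruences with $p=n^2+(n-1)^2$ is what must rule out every case except $n=2$, that is, $p=5$.
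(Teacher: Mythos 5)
Your reductions and the algebra you do carry out are correct: the normalization to $g=1$, the direct argument that $-1\notin H$ (so $n-1=|H|$ is odd and $n$ is even), the partition of $\cR_p$ into $H$, $-H$ and the $n-2$ further $H$-cosets coming from $(H-H)\stm\{0\}$, the bijection $h^*\mapsto(1-h^*)H$ (whose injectivity indeed follows from uniqueness of representations), and the Wilson-type evaluation of the product of all elements of the cyclic group $\cR_p/H$. This legitimately yields the necessary condition $n-1\in H$, i.e.\ $(n-1)^{n-1}\equiv1\pmod p$. But at that point the proof stops: as you yourself say, turning this into a contradiction for every $p>5$ is ``the main obstacle,'' and the final paragraph is a plan, not an argument. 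So the proposal has a genuine gap precisely at the step where the theorem would be proved.

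Moreover, it is doubtful that the gap can be closed along the lines you sketch. The condition $n-1\in H$ places $n-1$ in the subgroup of index $2n$ of $\F_p^\times$, which heuristically happens with probability about $1/(2n)$; since $\sum 1/(2n)$ diverges, one expects infinitely many admissible $n$ to survive this test, and the supplementary conditions you propose (that each divisor $d$ of $n-1$ lies in $\cR_p$, the coset congruences from $\prod_{j=1}^{d-1}(1-\alp^{jh/d})=d$, the symmetry $\phi(h-i)=\phi(i)+n/2$) are more constraints of the same multiplicative-congruence type, with no demonstrated incompatibility with $p=n^2+(n-1)^2$. What is missing is a quantitative ingredient. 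The paper's proof supplies exactly that: from $A-A\=\cR_p$ one gets $\chi_p(h)=1$ for $h\in H$ and $\chi_p(h_1+h_2)=1$ for distinct $h_1,h_2\in H$, hence the sums $\sig(x)=\sum_{a\in A}\big(\chi_p(x+a)+\chi_p(x-a)\big)$ are at least $2n-3$ on $H$ and equal $2(n-1)$ at $0$; comparing the resulting lower bound for $\sum_{x\in H\cup(-H)\cup\{0\}}\sig^2(x)$ with the exact evaluation of $\sum_{x\in\F_p}\sig^2(x)$ via \refe{sum-chi} forces $4n^2-10n+7\le 2n^2-1$, i.e.\ $n\le 4$, after which the single case $p=13$, $A=\{0,1,-1\}$ is excluded by inspection. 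To salvage your route you would need an actual proof that your congruence system is unsatisfiable for all even $n>2$ with $p=2n(n-1)+1$ prime, and no such proof is given.
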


For integer $\mu$ and a subset $A$ of an additively written abelian group, by
$\mu A$ we denote the dilate of $A$ by the factor of $\mu$:
  $$ \mu A:=\{\mu a\colon a\in A\}. $$
Extending slightly one of the central notions of the theory of difference
sets, we say that $\mu$ is a \emph{multiplier} of $A$ if $\mu$ is co-prime
with the exponent of the group, say $e$, and there exists a group element $g$
such that $\mu A=A+g$. Clearly, in this case every integer from the residue
class of $\mu$ modulo $e$ is also a multiplier of $A$. This shows that the
multipliers of a given set $A$ can be considered as elements of the group of
units $(\Z/e\Z)^\times$, and it is immediately seen that they actually form a
subgroup; we denote this subgroup by $M_A$, and call it the \emph{multiplier
subgroup} of $A$.

It is readily seen that all translates of a subset $A$ of an abelian group
have the same multiplier subgroup. If, furthermore, $|A|$ is co-prime with
the exponent $e$ of the group, then there is a translate of $A$ whose
elements add up to $0$. Denoting this translate by $A_0$ and observing that
 $\mu A_0=A_0+g$ implies $g=0$ (as follows by comparing the sums of
elements of each side), we conclude that if $\gcd(|A|,e)=1$, then $A$
has a translate which is fixed by every multiplier $\mu\in M_A$.

Here we are interested in the situation where the underlying group has prime
order. In this case, every subset $A$ has a translate fixed by its multiplier
subgroup $M_A$. This translate is then a union of several cosets of $M_A$
and, possibly, the zero element of the group. Consequently, using
multipliers, Theorems~\reft{NoGP1} and \reft{NoGP2} can be restated as
follows: if $p>13$ and $A\seq\F_p$ satisfies $A-A\=\cR_p$, then choosing
$g\in\F_p$ so that the elements of the translate $A-g$ add up to $0$, the set
$(A-g)\stm\{0\}$ is a union of at least two cosets of $M_A$.

Our next result shows, albeit in a rather indirect way, that ``normally'', a
set $A\seq\F_p$ satisfying $A-A\=\cR_p$ must have a large multiplier
subgroup.

For a prime $p\equiv 1\pmod 4$, let $G_p$ denote the greatest common divisor
of the orders modulo $p$ of all primes dividing $\frac{p-1}4$:
  $$ \textstyle
      G_p := \gcd \left\{ \ord_p(q)\colon q\mid\frac{p-1}4,
                                         \ q\ \text{is prime} \right\}. $$
\begin{theorem}\label{t:gcdtest}
If $p$ is a prime and $A\seq\F_p$ satisfies $A-A\=\cR_p$, then the multiplier
subgroup $M_A$ lies above the order-$G_p$ subgroup of $\F_p^\times$;
equivalently, $|M_A|$ is divisible by $G_p$.
\end{theorem}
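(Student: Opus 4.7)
The plan is to show that every element of the unique order-$G_p$ subgroup $H\leq\F_p^\times$ is a multiplier of $A$; granted this, $H\seq M_A$, and so $G_p=|H|$ divides $|M_A|$. I will work in the group ring $\Z[\F_p]$ and the cyclotomic ring $\Z[\zet_p]$ with $\zet_p=e^{2\pi i/p}$. Writing $D=\sum_{a\in A}[a]$, $R=\sum_{r\in\cR_p}[r]$, and $E=[0]$, the hypothesis $A-A\=\cR_p$ becomes the group-ring identity $D\bar D=nE+R$ (with $n=|A|$ and $\bar D=\sum_a[-a]$, using $\cR_p=-\cR_p$).

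First I would show that every prime $q$ dividing $(p-1)/4$ lies in $\cR_p$. Characters of the identity give $|\hat D(t)|^2=\alp_{\chi(t)}$ for $t\in\F_p^\times$, where $\alp_\pm=(2n-1\pm\sqrt p)/2\in\cO_{\Q(\sqrt p)}$ and $\alp_+\alp_-=(p-1)/4$. Were $q$ inert in $\Q(\sqrt p)$, then $(q)$ would be a $\sig_{-1}$-invariant prime dividing $(\alp_+)(\alp_-)$, hence dividing both factors and so their difference $(\sqrt p)$, giving $q^2\mid N(\sqrt p)=p$---impossible. So $q$ splits as $\fq\bar\fq$, and quadratic reciprocity (with $p\equiv 1\pmod 4$) yields $q\in\cR_p$.

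The key step is to pin down the prime factorization of $(\hat D(1))$ in $\Z[\zet_p]$. Reducing $\alp_+=(2n-1+\sqrt p)/2$ modulo $q$ via $\sqrt p\equiv\pm 1\pmod q$ (legitimate since $p\equiv 1\pmod q$), one sees that $\alp_+$ vanishes modulo exactly one of $\fq,\bar\fq$; hence $(\alp_+)_q=\fP_q^{e_q}$ for a single choice $\fP_q\in\{\fq,\bar\fq\}$, where $e_q=v_q((p-1)/4)$. Combined with the equation $(\hat D(1))\cdot\sig_{-1}((\hat D(1)))=(\alp_+)$ and the non-negativity of valuations, this forces $(\hat D(1))$ to be supported only on primes of $\Z[\zet_p]$ lying above $\fP_q$, for every prime $q\mid(p-1)/4$. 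The decomposition groups in $\Gal(\Q(\zet_p)/\Q)=\F_p^\times$ of these primes are precisely the cyclic subgroups $\<q\>$.

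Now let $h\in H$. By definition $h\in\<q\>$ for every prime $q\mid(p-1)/4$, so $\sig_h$ lies in every decomposition group attached to the support of $(\hat D(1))$; consequently $\sig_h((\hat D(1)))=(\hat D(1))$, making $u_h:=\sig_h(\hat D(1))/\hat D(1)$ a unit of $\cO_{\Q(\zet_p)}$. Since $h\in H\seq\cR_p$, the Galois conjugates $\hat D(ht)/\hat D(t)$ of $u_h$ all have absolute value $1$, so Kronecker's theorem forces $u_h=\pm\zet_p^g$ for some $g$. The minus sign is ruled out by Fourier inversion: $u_h=-\zet_p^g$ would give $\hat D(ht)=-\zet_p^{gt}\hat D(t)$ for every $t\ne 0$, and inverting then yields $1_{hA}(x)+1_A(x-g)=2n/p$ for every $x\in\F_p$; but $0<2n<p$ (from $p=2n(n-1)+1$ and $n\ge 2$) makes $2n/p$ a non-integer, contradicting integrality of the left-hand side. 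Hence $u_h=\zet_p^g$, i.e.\ $hA=A+g$, so $h\in M_A$ as wanted. The main obstacle is the one-sided-support step in the third paragraph: the hypothesis $q\mid(p-1)/4$ (rather than merely $q\in\cR_p$) is precisely what forces $(\alp_+)_q$ entirely onto one side of $(q)=\fq\bar\fq$, and this in turn pins the decomposition groups down to $\<q\>$ rather than to something larger.
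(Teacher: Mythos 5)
Your proof is correct and follows essentially the same route as the paper's: both pass to $\alp=\sum_{a\in A}\zet^a$ with $|\alp|^2=n+\rho$ of norm $\frac{p-1}4$, note that every prime ideal dividing $(\alp)$ lies above some prime $q\mid\frac{p-1}4$ and hence is fixed by the order-$G_p$ subgroup (which sits inside each decomposition group $\<q\>$), deduce via Kronecker's theorem that $\sig_h(\alp)/\alp=\pm\zet^g$, rule out the minus sign (the paper by a trace congruence modulo $p$, you by Fourier inversion and integrality), and conclude $hA=A+g$, i.e.\ $h\in M_A$. One small remark: the ``one-sided support'' step you single out as the main obstacle is actually superfluous --- all the argument needs is that the support of $(\alp)$ lies above rational primes dividing $\frac{p-1}4$ (immediate from $N(n+\rho)=\frac{p-1}4$), since the decomposition group over $\Q$ of \emph{any} prime of the cyclotomic field above $q$ is $\<q\>$, irrespective of which prime of $\Q(\sqrt p)$ it lies over.
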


The quantity $G_p$ is difficult to study analytically, but one can expect
that it is usually quite large: for, if $r^v\mid p-1$ with $r$ prime and
$v>0$ integer, then in order for $r^v$ not to divide $G_p$, there must be a
prime $q\mid\frac{p-1}4$ which is a degree-$r$ residue modulo $p$, the
``probability'' of which for every specific $q$ is $1/r$. Computations show
that, for instance, among all primes $p\le 10^{12}$ of the form
$p=2n(n-1)+1$, there are less than 1.4\% those satisfying $G_p<\sqrt p$.

Recalling that $A-A\=\cR_p$ implies $p=2n(n-1)+1$ with $n=|A|$, from
Theorem~\reft{gcdtest} and in view of Theorems~\reft{NoGP1} and~\reft{NoGP2}
we get
\begin{corollary}\label{c:gcdtest}
Suppose that $p$ is a prime. If there exists a subset $A\seq\F_p$ with
$A-A\=\cR_p$ then, writing $p=2n(n-1)+1$, either $G_p$ is a proper divisor of
$n$, or $G_p$ is a proper divisor of $n-1$.
\end{corollary}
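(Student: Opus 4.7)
The plan is to combine Theorem~\reft{gcdtest} with the reformulation of Theorems~\reft{NoGP1} and~\reft{NoGP2} spelled out in the paragraph preceding Theorem~\reft{gcdtest}. First, I would restrict to the case $p>13$ (so that both Theorems~\reft{NoGP1} and~\reft{NoGP2} are available), and invoke the reformulation to pick a translate $A':=A-g$, with $g\in\F_p$, such that $A'\stm\{0\}$ is a disjoint union of cosets of the multiplier subgroup $M_A\leq\F_p^\times$, and the number of cosets involved is at least $2$.

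From this structure I would extract a divisibility statement: writing $m:=|M_A|$ and letting $k\geq 2$ be the number of cosets, one has $|A'\stm\{0\}|=km$, so that $m$ is a proper divisor of $|A'\stm\{0\}|$. Since $|A|=n$ and $A'$ is a translate of $A$, the quantity $|A'\stm\{0\}|$ equals $n$ when $0\notin A'$, and equals $n-1$ when $0\in A'$. Thus either $m$ is a proper divisor of $n$, or $m$ is a proper divisor of $n-1$.

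Finally, Theorem~\reft{gcdtest} gives $G_p\mid m$. In either of the two cases above, this yields $G_p\mid n$ with $G_p\leq m<n$, respectively $G_p\mid n-1$ with $G_p\leq m<n-1$; in both cases $G_p$ is a proper divisor of the corresponding integer, which is precisely the conclusion of the corollary.

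The argument is essentially a bookkeeping exercise once the three cited results are in hand; the only genuine point to be careful about is the dichotomy $0\in A'$ vs.\ $0\notin A'$, which is exactly what produces the two alternatives ($n$ or $n-1$) in the statement. I expect no serious obstacle beyond this case split, since the existence of the translate $A'$ fixed by $M_A$ (available because $\gcd(|A|,p)=1$ trivially), the at-least-two-cosets condition, and the divisibility $G_p\mid|M_A|$ are all supplied verbatim by the preceding results.
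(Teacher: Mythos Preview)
Your proposal is correct and follows exactly the route the paper takes: the corollary is derived in one line from Theorem~\reft{gcdtest} together with the multiplier reformulation of Theorems~\reft{NoGP1} and~\reft{NoGP2}, and your write-up simply spells out the bookkeeping (the $0\in A'$ vs.\ $0\notin A'$ dichotomy and the divisibility chain $G_p\mid m\mid |A'\setminus\{0\}|$ with $m<|A'\setminus\{0\}|$). Your restriction to $p>13$ is exactly what the paper's derivation needs as well; the exceptional primes $p=5,13$ are outside the scope of Theorems~\reft{NoGP1}/\reft{NoGP2}, and indeed the corollary's conclusion fails for them.
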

To give an impression of how strong Corollary~\refc{gcdtest} is, we remark
that it sieves out over 99.7\% of all primes $p=2n(n-1)+1$ with $p<10^{12}$.

For integer $k\ge 1$, let $\Phi_k$ denote the $k$\,th cyclotomic polynomial.
Yet another useful consequence of Theorem~\reft{gcdtest} is
\begin{corollary}\label{c:cyclotomic}
Let $p$ be a prime, and suppose that there exists a subset $A\seq\F_p$ with
$A-A\=\cR_p$. If an element $z\in\F_p$ and an integer $k\ge 2$ satisfy
$\ord_p(z)\nmid k$ and $\ord_p(z)\mid G_p$, then $\Phi_k(z)\in\cR_p$.
\end{corollary}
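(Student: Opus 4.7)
The plan is to deduce from Theorem~\reft{gcdtest} that $z$ is itself a multiplier of $A$, translate $A$ so that the order-$G_p$ subgroup $\tilde H\le\F_p^\times$ fixes it, and finally evaluate $\chi(\Phi_k(z))$ (with $\chi$ the quadratic character of $\F_p$) by reducing, via the M\"obius form of the cyclotomic polynomial, to the quadratic character of a single difference inside one $\tilde H$-coset of $A$.

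Since $\F_p^\times$ is cyclic, $\ord_p(z)\mid G_p$ forces $z$ into the unique order-$G_p$ subgroup $\tilde H$, and Theorem~\reft{gcdtest} gives $\tilde H\seq M_A$. As noted just before Theorem~\reft{gcdtest}, the condition $\gcd(|A|,p)=1$ (automatic since $|A|<p$) singles out a translate of $A$ on which every multiplier fixes $A$; I replace $A$ by this translate, so $\tilde hA=A$ for all $\tilde h\in\tilde H$. Consequently $A\stm\{0\}$ is a disjoint union of $\tilde H$-cosets and, because $|A|\ge 2$ makes $|A\stm\{0\}|$ a positive multiple of $|\tilde H|=G_p$, there is at least one full coset $g\tilde H\seq A$. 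The crucial observation is then: for every $h\in\tilde H\stm\{1\}$ both $gh$ and $g$ lie in $A$ and are distinct, so the non-zero difference $g(h-1)$ belongs to $\cR_p$ by $A-A\=\cR_p$; applying $\chi$ yields
\[
  \chi(h-1)=\chi(g)\qquad\text{for every }h\in\tilde H\stm\{1\},
\]
a value that is independent of $h$.

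To finish I invoke the M\"obius identity $\Phi_k(X)=\prod_{d\mid k}(X^d-1)^{\mu(k/d)}$. The hypothesis $\ord_p(z)\nmid k$ implies $\ord_p(z)\nmid d$ for every $d\mid k$, so each factor $z^d-1$ is non-zero; at the same time $z^d\in\tilde H\stm\{1\}$, so the displayed identity applies to each such factor. Combining,
\[
  \chi(\Phi_k(z))=\prod_{d\mid k}\chi(z^d-1)^{\mu(k/d)}
                 =\chi(g)^{\sum_{d\mid k}\mu(k/d)}=\chi(g)^{0}=1,
\]
because $\sum_{d\mid k}\mu(k/d)=0$ for $k\ge 2$. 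The only conceptually non-routine move, I expect, is recognizing that a single $\tilde H$-coset inside $A$ already pins down $\chi(h-1)$ to one value on all of $\tilde H\stm\{1\}$; granted that, the M\"obius identity mechanically delivers the corollary.
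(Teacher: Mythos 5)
Your proposal is correct and takes essentially the same route as the paper: both arguments hinge on the observation that, by Theorem~\ref{t:gcdtest}, a multiplier-fixed translate of $A$ contains a full coset $g\tilde H$ of the order-$G_p$ subgroup, forcing $\chi(h-1)=\chi(g)$ for all $h\in\tilde H\setminus\{1\}$, and then on expressing $\Phi_k(z)$ through the nonzero quantities $z^d-1$ with $d\mid k$. The only difference is in the final bookkeeping: the paper telescopes $\frac{z^k-1}{z-1}=\prod_{d\mid k,\,d>1}\Phi_d(z)$ and inducts on $k$, whereas you apply the M\"obius product $\Phi_k(X)=\prod_{d\mid k}(X^d-1)^{\mu(k/d)}$ directly, which avoids the induction.
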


The practical implication of Corollary~\refc{cyclotomic} is that if we can
find a residue $z\in\F_p$ of degree $\frac{p-1}{G_p}$ and an integer $k\ge 2$
such that $z^k\ne1$ and $\Phi_k(z)\in\cN_p$, then there is no set $A\seq\F_p$
with $A-A\=\cR_p$.

To prove Corollary~\refc{cyclotomic}, denote by $H$ the order-$G_p$ subgroup
of $\F_p^\times$, and consider the differences $h'-h''$ with $h',h''\in H$,
$h'\ne h''$. By Theorem~\reft{gcdtest}, either all these differences are
quadratic residues, or they all are quadratic non-residues. If
 $\ord_p(z)\mid G_p$ and $\ord_p(z)\nmid k$, then both $z$ and $z^k$ are
non-unit elements of $H$, and consequently either both $z-1$ and $z^k-1$ are
quadratic residues, or they both are quadratic non-residues. In either case,
  $$ \prod_{\substack{d\mid k \\ d>1}} \Phi_d(z)
                                          = \frac{z^k-1}{z-1} \in \cR_p, $$
and the claim follows by induction on $k$.

It is somewhat surprising that if a set $A\seq\F_p$ with $A-A\=\cR_p$ exists,
then all orders $\ord_p(q)$ appearing in the definition of the quantity $G_p$
are odd.
\begin{theorem}\label{t:paritytest}
Let $p$ be a prime. If there exists a subset $A\seq\F_p$ satisfying
$A-A\=\cR_p$, then for every prime $q\mid\frac{p-1}4$, the order $\ord_p(q)$
is odd.
\end{theorem}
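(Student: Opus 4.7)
The plan is to use Theorem~\reft{gcdtest} to show that the given prime $q$ is a multiplier of $A$, so that if $\ord_p(q)=2m$ were even then $-1=q^m$ would also be a multiplier; the uniqueness clause in $A-A\=\cR_p$ will then force $A$ to be so small that $q$ cannot exist.

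First, since $\ord_p(q)\mid G_p$ by the very definition of $G_p$, the element $q$ regarded in $\F_p^\times$ lies in the unique order-$G_p$ subgroup of $\F_p^\times$; Theorem~\reft{gcdtest} places this subgroup inside $M_A$, whence $q\in M_A$. Assume for contradiction that $\ord_p(q)=2m$ is even. Then $q^m=-1$ in $\F_p^\times$ and, $M_A$ being a group, $-1\in M_A$ as well. Writing $n=|A|$ and noting that $\gcd(n,p)=1$ (since $p=2n(n-1)+1>n$), I translate $A$ to the set $A_0$ fixed by every multiplier; in particular $A_0=-A_0$, while the property $A_0-A_0\=\cR_p$ is preserved under translation.

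If $0\in A_0$, then for any $a\in A_0\setminus\{0\}$ the symmetry $A_0=-A_0$ gives $-a\in A_0$, and the distinct ordered pairs $(a,0)$ and $(0,-a)$ of $A_0\times A_0$ share the common difference $a$, violating uniqueness; hence $0\notin A_0$. Now let $t\in\cR_p$ and let $(a,b)$ be the unique ordered pair in $A_0\times A_0$ with $a\ne b$ and $a-b=t$. Since $p\equiv 1\pmod 4$ we have $-t\in\cR_p$, and both $(b,a)$ and $(-a,-b)$ are pairs in $A_0\times A_0$ producing the difference $-t$. Uniqueness forces $(b,a)=(-a,-b)$, so $b=-a$. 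Thus any two distinct elements of $A_0$ are negatives of each other, which leaves $|A_0|\le 2$. Combined with $|A_0|=n$ and $p=2n(n-1)+1$, this forces $n=2$ and $p=5$; but then $\frac{p-1}{4}=1$ has no prime divisors, contradicting the existence of $q$. Therefore $\ord_p(q)$ must be odd.

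The only step requiring any care is the transfer from ``$q\mid\frac{p-1}{4}$'' to ``$q\in M_A$'', which rests squarely on Theorem~\reft{gcdtest} together with the cyclicity of $\F_p^\times$; once $-1\in M_A$ is available, the remainder is a direct and essentially automatic exploitation of the unique-representation property in the symmetric translate $A_0$.
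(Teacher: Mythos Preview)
Your argument breaks at the very first step. You write ``$\ord_p(q)\mid G_p$ by the very definition of $G_p$'', but the definition goes the other way: $G_p$ is the \emph{greatest common divisor} of the orders $\ord_p(q)$, so $G_p\mid\ord_p(q)$ for each such $q$, not $\ord_p(q)\mid G_p$. Consequently $q$ need not lie in the order-$G_p$ subgroup of $\F_p^\times$ (it lies there only when $\ord_p(q)=G_p$), and Theorem~\reft{gcdtest} does \emph{not} place $q$ inside $M_A$. Without $q\in M_A$, the implication ``$\ord_p(q)$ even $\Rightarrow-1\in M_A$'' collapses, and the rest of the argument never gets off the ground. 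Note also that combining Theorem~\reft{gcdtest} with the conclusion $-1\notin M_A$ only yields that $G_p$ is odd, which is strictly weaker than Theorem~\reft{paritytest}: for instance, orders $3$ and $6$ have odd gcd but one of them is even.

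For comparison, the paper does not try to exhibit $q$ as a multiplier of $A$ at all (that would essentially require the Multiplier Conjecture, as the remark after Claim~\refm{diff} indicates). Instead it passes through Claim~\refm{diff}, which builds from $A$ a genuine $(p,n^2,n(n+1)/2)$-difference set $D=A+\nu A$, and then applies the Semi-primitivity Theorem to $D$; since $k-\lambda=\frac{p-1}{4}$, this gives $-1\notin\langle q\rangle_p$ directly, i.e.\ $\ord_p(q)$ is odd. Incidentally, your second half---deriving $|A|\le2$ from $-1\in M_A$ via the symmetric translate and uniqueness---is a perfectly valid and pleasantly short proof of Lemma~\refl{MAodd}, simpler than the power-sum argument in Section~\refs{MAodd}; it just does not, by itself, reach Theorem~\reft{paritytest}.
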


\begin{corollary}\label{c:paritytest}
Let $p$ be a prime. If there exists a subset $A\seq\F_p$ satisfying
$A-A\=\cR_p$, then writing $p=2n(n-1)+1$ we have $n\equiv 2\pmod 4$ or
$n\equiv 3\pmod 4$; hence, $p\equiv 5\pmod 8$.
\end{corollary}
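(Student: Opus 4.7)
My plan is to reformulate the conclusion arithmetically and then argue by contradiction, invoking Theorem \reft{paritytest} twice over. First observe that ``$n\equiv 2$ or $3\pmod 4$'' is equivalent to $(p-1)/4=n(n-1)/2$ being odd (a one-line case-check on $n\bmod 4$), and this in turn is equivalent to $p\equiv 5\pmod 8$, so the two halves of the corollary say the same thing. I will therefore assume for contradiction the ``bad'' case $n\equiv 0$ or $1\pmod 4$, equivalently $4\mid n(n-1)$, equivalently $2\mid (p-1)/4$.

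Under this hypothesis, Theorem \reft{paritytest} applied with $q=2$ forces $d:=\ord_p(2)$ to be odd. The key trick will be to exploit the defining relation $p=2n(n-1)+1$ in the form $2\equiv -\bigl(n(n-1)\bigr)^{-1}\pmod p$. Raising both sides to the $d$-th power and using $d$ odd (so $(-1)^d=-1$) together with $2^d\equiv 1\pmod p$, I will obtain
\[
  \bigl(n(n-1)\bigr)^d \;\equiv\; -1 \pmod p,
\]
so that $\ord_p(n(n-1))$ does not divide the odd integer $d$; in particular $\ord_p(n(n-1))$ must be \emph{even}.

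To close the argument, I will show that $\ord_p(n(n-1))$ is nevertheless forced to be odd. Every prime $q$ dividing $n(n-1)=2\cdot(p-1)/4$ also divides $(p-1)/4$ --- odd primes $q$ automatically, and $q=2$ by the bad-case assumption --- so applying Theorem \reft{paritytest} again gives $\ord_p(q)$ odd for every such $q$. Factoring $n(n-1)=\prod_q q^{a_q}$ and noting that in the cyclic group $\F_p^\times$ the order of a product divides the lcm of the orders of the factors (while each $\ord_p(q^{a_q})$ divides the odd number $\ord_p(q)$), I conclude that $\ord_p(n(n-1))$ is odd --- contradicting the previous paragraph. The subsequent ``hence $p\equiv 5\pmod 8$'' is then a one-line verification. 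I expect the only non-routine step to be spotting the identity $2\equiv-\bigl(n(n-1)\bigr)^{-1}\pmod p$ and realising that Theorem \reft{paritytest}, applied both to $q=2$ and to the prime factorisation of $n(n-1)$, yields contradictory parity constraints on $\ord_p(n(n-1))$; everything else is routine bookkeeping.
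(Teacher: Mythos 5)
Your proposal is correct and rests on exactly the same mechanism as the paper's proof: under the bad-case assumption $2\mid\frac{p-1}{4}$, every prime dividing $p-1$ divides $\frac{p-1}{4}$ and so has odd order modulo $p$ by Theorem~\reft{paritytest}, which is incompatible with $-1$ having order $2$. The paper reaches the contradiction in one line by noting $p-1\equiv-1\pmod p$ has even order, whereas you split off the factor $2$ and compute $(n(n-1))^d\equiv-1$ explicitly --- slightly longer bookkeeping, but the same argument.
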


To derive Corollary~\refc{paritytest} from Theorem~\reft{paritytest}, observe
that if we had $n\equiv 0\pmod 4$ or $n\equiv 1\pmod 4$, then $\frac{p-1}4$
were even and, consequently, $\frac{p-1}4$ and $p-1$ would have same prime
divisors. As a result, all prime divisors of $p-1$ would be of odd order
modulo $p$, which is impossible as $p-1$ itself has even order.

Using a biquadratic reciprocity law due to Lemmermeyer \refb{le}, from
Theorem~\reft{paritytest} we will derive
\begin{theorem}\label{t:divtest}
Let $p$ be a prime. If there exists a subset $A\seq\F_p$ satisfying
$A-A\=\cR_p$ then, writing $p=2n(n-1)+1$, neither $n$ nor $n-1$ have prime
divisors congruent to $7$ modulo $8$. Moreover, of the two numbers $n$ and
$n-1$, the odd one has no prime divisors congruent to $5$ modulo $8$, and the
even one has no prime divisors congruent to $3$ modulo $8$.
\end{theorem}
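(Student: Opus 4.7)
The plan is to combine Theorem~\reft{paritytest} with Lemmermeyer's rational biquadratic reciprocity \refb{le}, using the identity $p = n^2 + (n-1)^2$. This identity is immediate from $2p = 4n(n-1) + 2 = (2n-1)^2 + 1$; combined with Corollary~\refc{paritytest} (which gives $p \equiv 5 \pmod 8$), it yields a decomposition $p = a^2 + b^2$ in which $a\in\{n,n-1\}$ is odd, $b = 2m$ is even with $m$ odd, and $a,b$ are consecutive integers.

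The first step is to upgrade Theorem~\reft{paritytest} to the statement that every odd prime $q$ dividing $n(n-1)$ is a biquadratic residue modulo $p$. Corollary~\refc{paritytest} makes $\frac{p-1}{4} = \frac{n(n-1)}{2}$ odd, so every odd prime dividing $n(n-1)$ also divides $\frac{p-1}{4}$, and Theorem~\reft{paritytest} then gives $\ord_p(q)$ odd. Since $\ord_p(q)$ divides $p-1$ and is coprime to $4$, in fact $\ord_p(q)\mid\frac{p-1}{4}$; that is, $q^{(p-1)/4}\equiv 1\pmod p$.

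The second step is to feed this into Lemmermeyer's quartic reciprocity for each such $q$. The law expresses the biquadratic character $\Leg[p]{q}_4\in\{\pm 1\}$ as an explicit Legendre symbol built from $a,b$ modulo $q$. When $q\mid a$, we have $p\equiv b^2\pmod q$ and $b\equiv\pm 1\pmod q$ (since $a,b$ are consecutive), so the reciprocity formula collapses to $\Leg[q]{-2}$; requiring this symbol to be $+1$ forces $q\equiv 1$ or $3\pmod 8$, excluding $q\equiv 5$ and $q\equiv 7\pmod 8$. Symmetrically, when $q\mid b$ one uses $p\equiv a^2\pmod q$ with $a\equiv\pm 1\pmod q$ to reduce the right-hand side to $\Leg[q]{-1}$; this forces $q\equiv 1\pmod 4$, excluding $q\equiv 3$ and $q\equiv 7\pmod 8$. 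Together these are exactly the three assertions of Theorem~\reft{divtest}, once we recall that the odd one of $\{n,n-1\}$ is $a$ and the even one is $b$.

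The main obstacle is the correct application of Lemmermeyer's reciprocity in step two: its precise form depends on normalizations of $a$ and $b$ (e.g.\ the convention $a\equiv 1\pmod 4$) and on the residue class of $q$ modulo $4$ (the quartic symbol $\Leg[q]{p}_4$ is defined in the standard way only for $q\equiv 1\pmod 4$; for $q\equiv 3\pmod 4$ one invokes a supplementary form of the law). Once the correct formula is in hand, the simplification using $q\mid a$ (resp.\ $q\mid b$) together with $b = a\pm 1$ to reduce the right-hand side to $\Leg[q]{-2}$ (resp.\ $\Leg[q]{-1}$) is a short calculation, and the translation from these Jacobi symbols to congruences modulo $8$ is routine.
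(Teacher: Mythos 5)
Your proposal is correct and follows essentially the same route as the paper: Theorem~\reft{paritytest} (via $\ord_p(q)$ odd and $\frac{p-1}{4}$ odd) makes every odd prime $q\mid n(n-1)$ a biquadratic residue modulo $p$, and Lemmermeyer's law applied to $p=n^2+(n-1)^2$ with $c=1$ then collapses, exactly as you predict, to $\left(\frac{-1}{q}\right)=1$ for $q$ dividing the even member and $\left(\frac{-2}{q}\right)=1$ for $q$ dividing the odd member. The only detail worth making explicit is that the extra factor $-1$ in your collapsed symbols comes from $\left(\frac{-1}{p}\right)_4=-1$, which is where $p\equiv 5\pmod 8$ enters a second time.
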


Computations show that there are very few primes passing both the test of
Corollary~\refc{gcdtest} and that of Theorem~\reft{divtest}. In the range
$13<p<10^{18}$, there are only five such primes, corresponding to the values
of $n$ listed in the following table:

\bigskip
\renewcommand{\arraystretch}{1.15}
\begin{center}
\begin{tabular}{ r | c | c | l }
          $n$ & \, $\del$\, & $(n-\del)/G_p$ & $n-1,\ n$ \\ \hline
           51 & 1 & 2 & $2\cdot 5^2,\ 3\cdot 17$ \\
          650 & 0 & 2 & $11\cdot 59,\ 2\cdot 5^2\cdot 13$ \\
        32283 & 1 & 2 & $2\cdot 16141,\ 3^2\cdot 17\cdot 211$ \\
     57303490 & 1 & 3 & $3\cdot 1579\cdot 12097,\ 2\cdot5\cdot 5730349$ \\
    377687811 & 0 & 3 & $2\cdot 5\cdot 17\cdot 113\cdot 19661,\ 3\cdot 1787\cdot 70451$
\end{tabular}

\bigskip\small{
{\sc Fig.~1}. The second column gives the  value of $\del\in\{0,1\}$ such
that $G_p\mid n-\del$, \\ \qquad\quad the last column contains the prime
decompositions of $n-1$ and $n$.}
\end{center}

Every individual value of $n$ in the table is easy to rule out using
Corollary~\refc{cyclotomic}. For instance, the first exceptional value $n=51$
corresponds to the prime $p=5101$; since $(5101-1)/G_{5101}=204$, applying
Corollary~\refc{cyclotomic} with $k=2$ we conclude that if $A\seq\F_{5101}$
satisfying $A-A\=\cR_{5101}$ existed, then every degree-$204$ residue
$z\in\F_p$ with $z^2\ne 1$ would satisfy $z+1\in\cR_{5101}$; this conclusion,
however, is violated for $z=2^{204}$.

The remaining four exceptional cases can be dealt with in an analogous
way; say, one can take $z=2^{(p-1)/G_p}$ for $n=650$ and $n=377687811$,
and $z=3^{(p-1)/G_p}$ for $n=32283$ and $n=57303490$ (with $k=2$ in each
case). We thus conclude that there are no primes $13<p<10^{18}$ for which
$A\seq\F_p$ with $A-A\=\cR_p$ exists.

Theorem~\reft{paritytest} will be derived as a straightforward corollary of
the Semi-primitivity Theorem from the theory of difference sets. Recall, that
for positive integer $v,k$, and $\lam$, a \emph{$(v,k,\lam)$-difference set}
is a $k$-element subset of a $v$-element group such that (assuming additive
notation) every non-zero group element has exactly $\lam$ representations as
a difference of two elements of the set. The following somewhat unexpected
claim shows how difference sets come into the play, and allows us to apply
the well-established machinery of difference sets in our problem.
\begin{claim}\label{m:diff}
Suppose that $p$ is a prime and $A\seq\F_p$ satisfies $A-A\=\cR_p$. Write
$n:=|A|$ and fix arbitrarily a quadratic non-residue $\nu\in\cN_p$. Then the
$n^2$ sums $a'+\nu a''$ with $a',a''\in A$ are pairwise distinct, and the set
$D$ of all these sums is a $(p,n^2,n(n+1)/2)$-difference set in $\F_p$.
\end{claim}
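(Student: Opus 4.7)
\medskip

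\noindent\textbf{Proof plan.} The plan is to verify the two assertions separately: first that the $n^2$ sums $a' + \nu a''$ are distinct, and then that every non-zero element of $\F_p$ has exactly $n(n+1)/2$ representations as a difference of two elements of $D$. Both follow by exploiting the extremely rigid representation function of $A$ implied by $A - A \= \cR_p$: if we set $r(x) := |\{(a',a'') \in A \times A : a' - a'' = x\}|$, then $r(0) = n$, $r(x) = 1$ for $x \in \cR_p$, and $r(x) = 0$ for $x \in \cN_p$.

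For distinctness, suppose $a_1 + \nu a_2 = b_1 + \nu b_2$ with $a_i,b_i \in A$. Then $a_1 - b_1 = \nu(b_2 - a_2)$. The left-hand side lies in $\cR_p \cup \{0\}$, while the right-hand side lies in $\nu\cR_p \cup \{0\} = \cN_p \cup \{0\}$. The intersection being $\{0\}$, both sides vanish, forcing $a_1 = b_1$ and $a_2 = b_2$. In particular $|D| = n^2$.

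For the difference-set count, fix $d \in \F_p \setminus \{0\}$. The number of representations is
\[
  N(d) \;=\; \sum_{x + \nu y = d} r(x)\,r(y).
\]
I split the sum according to whether $x = 0$, $y = 0$, or both are non-zero. The term $y = 0$ contributes $n\,r(d)$; the term $x = 0$ contributes $n\,r(d/\nu)$. Since $\nu \in \cN_p$, exactly one of $d, d/\nu$ is a residue, so these two boundary terms together contribute exactly $n$. The remaining contribution is
\[
  M(d) \;=\; \bigl|\{(x,y) \in \cR_p \times \cR_p : x + \nu y = d\}\bigr|.
\]

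The main (and really the only) computation is to show $M(d) = \tfrac{p-1}{4}$, which equals $n(n-1)/2$ since $p = 2n(n-1)+1$. I would write $M(d)$ using the Legendre symbol $\chi$ via the indicator $\tfrac12(1+\chi(\cdot))$, parametrize by $x$ with $y = (d-x)/\nu$ so that $\chi(y) = -\chi(d-x)$, and reduce the question to evaluating the three sums $\sum_{x \ne 0,d}\chi(x)$, $\sum_{x \ne 0,d}\chi(d-x)$, and $\sum_{x \ne 0,d}\chi\bigl(x(d-x)\bigr)$. The first two equal $-\chi(d)$ (and cancel against each other with opposite signs), while the last is a quadratic Gauss-type sum whose standard evaluation gives $-\chi(-1) = -1$, because $p \equiv 1 \pmod 4$. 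Assembling the four terms yields $M(d) = \tfrac14\bigl((p-2) + 1\bigr) = \tfrac{p-1}{4}$, and hence $N(d) = n + n(n-1)/2 = n(n+1)/2$, as required. The only potential pitfall is correctly keeping track of which terms are excluded when $x = 0$ or $y = 0$ (since $0 \notin \cR_p$), but the bookkeeping is straightforward once the boundary contribution $n$ is isolated.
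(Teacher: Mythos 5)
Your proof is correct, but it takes a different route from the paper's primary argument. For distinctness you use essentially the paper's observation (a coincidence would force $\nu$ to be a ratio of two differences from $A-A$, hence a residue). For the difference-set count, however, the paper works in the $p$th cyclotomic field: it sets $\alp=\sum_{a\in A}\zet^a$, notes $|\alp|^2=n+\rho$ with $\rho=(\sqrt p-1)/2$ the quadratic Gaussian period, writes $\del=\alp\phi(\alp)$ for $D$, and computes $|\del|^2=(n+\rho)(n-1-\rho)=n(n-1)/2$ to read off the representation numbers. Your convolution identity $N(d)=\sum_{x+\nu y=d}r(x)r(y)$ is instead the representation-function transcription of the paper's \emph{alternative} group-ring proof, $DD^{(-1)}=(n+\cR_p)(n+\cN_p)$; your boundary terms $x=0$, $y=0$ correspond to the cross terms $n\F_p^\times$, and your quantity $M(d)$ is exactly the number of representations of $d$ as (residue)\,$+$\,(non-residue), which the paper simply quotes as the well-known value $\frac{p-1}4$. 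You supply a proof of that value via the indicator $\frac12(1+\chi(\cdot))$ and the Jacobsthal-type sum $\sum_x\chi(x(d-x))=-\chi(-1)$, which is in fact the same identity the paper records as \refe{sum-chi} and uses in Section~\refs{NoGP}; your evaluation and the final bookkeeping $N(d)=n+\frac{n(n-1)}2=\frac{n(n+1)}2$ all check out. The trade-off: your argument is entirely elementary and self-contained, whereas the paper's Gaussian-period computation sets up the algebraic machinery ($\alp$, $\rho$, $|\alp|^2=n+\rho$) that is reused in the proof of Theorem~\reft{gcdtest} and in the Appendix, which is presumably why the authors chose it as the main proof.
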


We remark that the Multiplier Conjecture \cite[Conjecture~6.7]{b:la} along
with Claim~\refm{diff} lead to a conclusion much stronger than
Corollary~\refc{gcdtest}: namely, if there is a subset $A\seq\F_p$ with
$A-A\=\cR_p$, then, writing $p=2n(n-1)+1$, the \emph{least common multiple}
$\lcm\left\{\ord_p(q)\colon q\mid\frac{p-1}4\right\}$ is a divisor of either
$n$ or $n-1$.

On a historical note, it was Broughton \refb{b} who first used biquadratic
reciprocity to study $(2n(n-1)+1,n^2,n(n+1)/2)$-difference sets.

Our last result is a lemma which is used in the proof of
Theorems~\reft{NoGP1} and~\reft{NoGP2}, and which we believe is also
of independent interest.
\begin{lemma}\label{l:MAodd}
If $p>5$ is a prime and $A\seq\F_p$ satisfies $A-A\=\cR_p$, then $|M_A|$ is
odd; that is, $-1\notin M_A$.
\end{lemma}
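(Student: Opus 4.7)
The plan is to show $-1\notin M_A$ directly; since $M_A$ sits inside the cyclic group $\F_p^\times$ of even order $p-1$, its order is odd precisely when it omits the unique involution $-1$.

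Suppose for contradiction that $-1\in M_A$, so $-A=A+g$ for some $g\in\F_p$. Because $p$ is odd, I can form $B:=A+g/2$, and a one-line computation yields $-B=-A-g/2=(A+g)-g/2=A+g/2=B$. Differences are translation-invariant, so $B-B\=\cR_p$ and $|B|=|A|=n$. This centering step is essentially free, using the remark (already in the excerpt) that translates of $A$ share the same multiplier subgroup.

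The heart of the proof is then an involution/uniqueness clash. The symmetry $B=-B$ makes the map $(b',b'')\mapsto(-b'',-b')$ a well-defined involution on the set of ordered pairs of distinct elements of $B$, and it visibly preserves the difference since $(-b'')-(-b')=b'-b''$. The uniqueness clause in $B-B\=\cR_p$ forbids two distinct ordered pairs from yielding the same difference, so this involution is forced to be the identity; equivalently, $b'=-b''$ whenever $b',b''\in B$ are distinct. This is extremely restrictive: three distinct elements of $B$ would produce two with a common negative, so $|B|\le 2$. Combined with $|B|=n\ge 2$, this pins down $p=2\cdot 2\cdot 1+1=5$, contradicting $p>5$.

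I do not foresee any serious obstacle; the whole argument is a short symmetry observation, the only moving parts being the translation step (which requires $p$ odd) and the translation-invariance of the multiplier subgroup. The case $p=5$ is genuinely sharp, as witnessed by the set $\{2,3\}\seq\F_5$ singled out earlier in the paper, which is symmetric about $0$ and exhibits $-1\in M_A$.
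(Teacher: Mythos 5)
Your proof is correct, and it takes a genuinely different and much more elementary route than the paper's. The key point you exploit is that the uniqueness in $A-A\=\cR_p$ is uniqueness of the \emph{ordered} pair $(a',a'')$ representing a given residue (this is exactly what forces $n(n-1)=\frac{p-1}{2}$), so once $A$ is centered to a symmetric set $B=-B$, the difference-preserving involution $(b',b'')\mapsto(-b'',-b')$ must fix every pair of distinct elements of $B$; this gives $b''=-b'$ throughout, hence $|B|\le 2$ and $p=5$. The centering step and the translation-invariance of $M_A$ are both legitimate, and the reduction of ``$|M_A|$ odd'' to ``$-1\notin M_A$'' is right since $-1$ is the unique involution of the cyclic group $\F_p^\times$. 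The paper argues quite differently: assuming $m:=|M_A|$ even, it translates $A$ so that $A\stm\{0\}$ is a union of $M_A$-cosets and uses the vanishing of the power sums $\sum_{x\in\cR_p}x^k$ for $1\le k<(p-1)/2$ to show, by induction on $k$, that all power sums of the $m$th powers of the coset representatives vanish, forcing those representatives to be zero. Your pairing argument is shorter, avoids the paper's case split on whether $0\in A$, and makes the exceptional example $\{2,3\}\seq\F_5$ (which is literally symmetric about $0$) the visible obstruction; the paper's moment computation is heavier but needs no symmetrization of $A$ and matches the analytic flavour of its Section 3. Both proofs are sound.
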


The rest of the paper is devoted to the proofs of the above-discussed
results. We prove Lemma~\refl{MAodd} in the next section, and
Theorems~\reft{NoGP1} and \reft{NoGP2} in Section~\refs{NoGP}. In
Section~\refs{diff} we prove Claim~\refm{diff}, present the Semi-primitivity
Theorem, and derive Theorem~\reft{paritytest}. In Section~\refs{divtest} we
state Lemmermeyer's biquadratic reciprocity law and prove
Theorem~\reft{divtest}. Theorem~\reft{gcdtest} is proved in
Section~\refs{gcdtest}; the proof uses some basic algebraic number theory.
Finally, in the Appendix we give an equivalent restatement of the problem
studied in this paper in terms of algebraic number theory.

\section{$|M_A|$ is odd: the proof of Lemma~\refl{MAodd}}\label{s:MAodd}

Suppose that $p$ is a prime and $A\seq\F_p$ satisfies $A-A\=\cR_p$; we want
to show that the multiplier subgroup $M_A<\F_p^\times$ has odd order.

For a subset $S\seq\F_p$ and integer $j\ge 0$, let
  $$ \sig_j(S)=\sum_{s\in S} s^j, $$
subject to the agreement that if $0\in S$ and $j=0$, then the corresponding
summand is equal to $1$ (so that $\sig_0(S)=|S|$). For every $1\le k<(p-1)/2$
we have
  $$ \sum_{a',a''\in A} (a'-a'')^k = \sum_{x\in\cR_p} x^k = 0; $$
expanding the binomial and changing the order of summation, we get
\begin{equation}\label{e:tmp26}
  \sum_{j=0}^k (-1)^j \binom kj \sig_j(A) \sig_{k-j}(A) = 0.
\end{equation}
Write $m:=|M_A|$. Having $A$ suitably translated, we can assume that
$A\stm\{0\}$ is a union of cosets of $M_A$, and let then $C$ be the set of
arbitrarily chosen representatives of these cosets. We distinguish two cases.

Suppose first that $0\notin A$. In this case $\sig_j(A)=\sig_j(C)\sig_j(M_A)$
and
\begin{equation*}
   \sig_j(M_A) = \begin{cases}
                m\ &\text{if}\ m\mid j, \\
                0\ &\text{otherwise},
              \end{cases}
\end{equation*}
whence \refe{tmp26} is non-trivial only if $m\mid k$, and in this case (with
a minor change of notation) it can be re-written as
\begin{equation}\label{e:tmp27}
   \sum_{j=0}^k (-1)^{jm} \binom{km}{jm} \sig_{jm}(C) \sig_{(k-j)m}(C) = 0,
                                          \quad 0<k<\frac{p-1}{2m}.
\end{equation}
Taking $k=1$ gives $(1+(-1)^m)\sig_0(C)\sig_m(C)=0$, and if $m$ were even
(contrary to the assertion of the lemma) then, in view of $\sig_0(C)=|C|\ne
0$, we would have $\sig_m(C)=0$. Furthermore, we could then re-write
\refe{tmp27} as
  $$ 2|C|\sig_{km}(C)
         = -\sum_{j=1}^{k-1} \binom{km}{jm} \sig_{jm}(C) \sig_{(k-j)m}(C), $$
and substituting subsequently $k=2,3,\ldots$ we conclude that
$\sig_{km}(C)=0$ whenever $0<k<(p-1)/(2m)$. Equivalently, the $|C|$ elements
$c^m\ (c\in C)$ have the property that the sum of their $k$th powers vanish
for all $0<k<(p-1)/(2m)$; hence for all $0<k\le|C|$ in view of
  $$ |C| = \frac{|A|}{|M_A|} = \frac nm < \frac{n(n-1)}m = \frac{p-1}{2m}. $$
(we use here our standard notation: $n=|A|$ and $p=2n(n-1)+1$. Notice that
this estimate uses the assumption $p>5$.) As a result, all these elements,
and therefore also all elements of $C$, are equal to $0$, a contradiction
establishing the assertion in the case $0\notin A$.

Turning to the situation where $0\in A$, we write $A_0:=A\stm\{0\}$ and
notice that in this case $\sig_0(A)=|A|=m|C|+1$ and
$\sig_j(A)=\sig_j(A_0)=\sig_j(C)\sig_j(M_A)$ for every $j>0$; as a result,
  $$ \sig_j(A) = \begin{cases}
                m|C|+1\ &\text{if}\ j=0, \\
                m\sig_j(C)\   &\text{if $m\mid j$ and $j>0$}, \\
                0\         &\text{if $m\nmid j$}.
              \end{cases} $$
Hence, assuming that $m$ is even, from \refe{tmp26} we get
  $$ 2(m|C|+1)\cdot m\sig_{km}(C)
       = -m^2 \sum_{j=1}^{k-1} \binom{km}{jm}\sig_{jm}(C)\sig_{(k-j)m}(C),
                                 \quad 0<k<\frac{p-1}{2m}. $$
Now taking $k=1$ yields $\sig_m(C)=0$, and then subsequently $\sig_{km}(C)=0$
for each $0<k<(p-1)/(2m)$, leading to a contradiction exactly as above.

This completes the proof of Lemma~\refl{MAodd}.

\section{Proofs of Theorems~\reft{NoGP1} and~\reft{NoGP2}:
  One Coset is not Enough}\label{s:NoGP}

For a prime $p$, let $\chi_p$ denote the quadratic character modulo $p$
extended onto the whole field $\F_p$ by $\chi_p(0)=0$. We need the following
well-known identity 
(which is equivalent, for instance, to \cite[Chapter~5, Exercise~8]{b:ir}):
\begin{equation}\label{e:sum-chi}
  \sum_{x\in\F_p} \chi_p((x+a)(x+b)) =
       \begin{cases}
         p-1 &\text{if}\ a=b, \\
         -1  &\text{if}\ a\ne b,
       \end{cases} \qquad a,b\in\F_p.
\end{equation}
Recall, that we are interested in the situation where $p\equiv 1\pmod 4$, in
which case $\chi_p(-1)=1$; equivalently, $\chi_p(-x)=\chi_p(x)$ for all
$x\in\F_p$.

\begin{proof}[Proof of Theorem~\reft{NoGP1}]
Clearly, it suffices to show that for $p>13$ prime and $H<\F_p^\times$, one
cannot have $H-H\=\cR_p$ or $H-H\=\cN_p$. For a contradiction, suppose that
one of these relations holds true. Write $n:=|H|$, so that $p=2n(n-1)+1$.
From Lemma~\refl{MAodd} (as applied to a suitable coset of $H$ in the case
$H-H\=\cN_p$), we know that $n$ is odd, implying $-1\notin H$; hence, $H$ is
disjoint with $-H:=\{-h\colon h\in H\}$.

For any $h_1,h_2\in H$ with $h_1\ne h_2$, either both $h_1^2-h_2^2$ and
$h_1-h_2$ are quadratic residues, or they both are quadratic non-residues. In
either case, their quotient $h_1+h_2$ is a quadratic residue; that is,
\begin{equation}\label{e:2HinQ}
  \chi_p(h_1+h_2)=1,\quad h_1,h_2\in H,\ h_1\ne h_2.
\end{equation}
We distinguish two cases, according to whether $H-H\=\cR_p$ or $H-H\=\cN_p$.

Suppose first that $H-H\=\cR_p$, and let in this case
  $$ \sig(x) := \sum_{h\in H} \big( \chi_p(x+h) + \chi_p(x-h) \big),
                                                          \quad x\in\F_p. $$
In view of \refe{2HinQ} and our present assumption $H-H\=\cR_p$, for each
$x\in H$ we have
  $$ \sig(x) \ge (n-2) + (n-1) = 2n-3. $$
Along with $\sig(-x)=\sig(x)$ (following from $p\equiv 1\pmod 4$ and
$\chi_p(-1)=1$ resulting from it), this yields
\begin{equation}\label{e:overB1}
  \sum_{x\in H\cup(-H)} \sig^2(x) \ge 2n(2n-3)^2.
\end{equation}
On the other hand, the sum extended over \emph{all} $x\in\F_p$ can be
computed explicitly:
\begin{align}\label{e:complsum1}
  \sum_{x\in\F_p} \sig^2(x)
     &= \sum_{x\in\F_p} \sum_{h_1,h_2\in H}
              \big( \chi_p(x+h_1)+\chi_p(x-h_1) \big)
                             \big( \chi_p(x+h_2)+\chi_p(x-h_2) \big) \notag \\
     &= \sum_{h_1,h_2\in H} \sum_{x\in\F_p}
               \big( \chi_p((x+h_1)(x+h_2)) + \chi_p((x-h_1)(x-h_2))  \notag \\
     &\hspace{1.4in}
             + \chi_p((x+h_1)(x-h_2)) + \chi_p((x-h_1)(x+h_2)) \big) \notag \\
     &= 2pn-4n^2 \notag \\
     &= 2n(2n^2-4n+1),
\end{align}
as it follows from \refe{sum-chi} and since $h_1\ne-h_2$ whenever
 $h_1,h_2\in H$ in view of $-1\notin H$. Comparing \refe{overB1} and
\refe{complsum1} we conclude that $2n(2n-3)^2\le2n(2n^2-4n+1)$, which
simplifies to $(n-2)^2\le 0$ and thus yields $n=2$, contrary to the
assumption $p>13$.

Addressing now the case where $H-H\=\cN_p$, we re-define the sum $\sig(x)$
letting this time
  $$ \sig(x) := \sum_{h\in H} \big(\chi_p(x+h)-\chi_p(x-h)\big),
                                                           \quad x\in\F_p. $$
In view of \refe{2HinQ} and the assumption $H-H\=\cN_p$, we have again
  $$ \sig(x) \ge (n-2)+(n-1)=2n-3,\quad x\in H. $$
Since $\sig(-x)=-\sig(x)$, we derive that
  $$ \sum_{x\in H\cup(-H)}\sig^2(x) \ge 2n(2n-3)^2. $$
On the other hand, a computation similar to \refe{complsum1} gives
  $$ \sum_{x\in\F_p} \sig^2(x) = 2pn = 2n(2n^2-2n+1). $$
As a result, $2n(2n-3)^2 \le 2n(2n^2-2n+1)$, leading to $n\le 4$. To complete
the proof we notice that $n\le 3$ correspond to $p\le 13$, while $n=4$ yields
$p=25$, which is composite.
\end{proof}

\begin{proof}[Proof of Theorem~\reft{NoGP2}]
The proof is a variation of that of Theorem~\reft{NoGP1}.

Aiming at a contradiction, suppose that $p>5$ is prime, $H<\F_p^\times$,
$g\in\F_p^\times$, and $A:=gH\cup\{0\}$ satisfies $A-A\=\cR_p$. Since $g$ is
representable as a difference of two elements of $A$, we have $g\in\cR_p$,
and dilating $A$ by the factor $g^{-1}$ we can assume that, indeed, $g=1$;
that is, $A=H\cup\{0\}$.

Write $n:=|A|$, so that $p=2n(n-1)+1$ and $|H|=n-1$. From Lemma~\refl{MAodd},
we know that $|H|$ is odd, whence $-1\notin H$ and therefore $H$ is disjoint
with $-H$.

For any $h\in H$ and $a_1,a_2\in A$ with $a_1\ne a_2$, both $a_1h-a_2h$ and
$a_1-a_2$ are quadratic residues, and so must be their quotient $h$; thus,
\begin{equation}\label{e:HinQ2}
  \chi_p(h)=1,\quad h\in H.
\end{equation}
Similarly,
\begin{equation}\label{e:2HinQ2}
  \chi_p(h_1+h_2)=1,\quad h_1,h_2\in H,\ h_1\ne h_2
\end{equation}
in view of $h_1+h_2=(h_1^2-h_2^2)/(h_1-h_2)$.

Let
  $$ \sig(x) := \sum_{a\in A} \big(\chi_p(x+a)+\chi_p(x-a)\big),
                                                      \quad x\in\F_p. $$
From \refe{HinQ2} and \refe{2HinQ2}, and since $A-A\=\cR_p$, we have
  $$ \sig(x) \ge (n-2)+(n-1)=2n-3,\quad x\in H $$
and
  $$ \sig(0) = 2(n-1). $$
Observing that $\sig(-x)=\sig(x)$ we derive that
  $$ \sum_{x\in H\cup(-H)\cup\{0\}}\sig^2(x)
                      \ge 2(n-1)(2n-3)^2+4(n-1)^2 = 2(n-1)(4n^2-10n+7). $$
On the other hand, a computation similar to \refe{complsum1} gives
  $$ \sum_{x\in\F_p} \sig^2(x) = 2(n+1)p-4n^2 = 2(n-1)(2n^2-1). $$
As a result, $4n^2-10n+7\le 2n^2-1$, implying $n\le 4$. The assumption $p>5$
now gives $n=3$; consequently, $p=13$ and $|H|=2$, whence $H=\{1,-1\}$.
However, the set $A=\{0,1,-1\}\seq\F_{13}$ does not have the property
$A-A\=\cR_{13}$.
\end{proof}

\section{Proofs of Claim~\refm{diff} and Theorem~\reft{paritytest}}%
  \label{s:diff}

\begin{proof}[Proof of Claim~\refm{diff}]
To see that the sums $a'+\nu a''$ are pairwise distinct, we notice that
$a_1'+\nu a_1''=a_2'+\nu a_2''$ with $(a_1',a_1'')\ne(a_2',a_2'')$ would
result in $\nu=(a_1'-a_2')/(a_2''-a_1'')$, while for
$a_1',a_1'',a_2',a_2''\in A$, both the numerator and the denominator are
quadratic residues in view of $A-A\=\cR_p$.

It remains to show that every non-zero element of $\F_p$ has exactly
$n(n+1)/2$ representations as a difference of two elements of the set
$D:=\{a'+\nu a''\colon a',a''\in A\}$.

Let $\zet$ be a fixed primitive root of unity of degree $p$, and denote by
$\K$ the $p$th cyclotomic field; that is, $\zet\ne\zet^p=1$ and
$\K=\Q[\zet]$. Write $\alp:=\sum_{a\in A}\zet^a$, so that $A-A\=\cR_p$ yields
\begin{equation}\label{e:abs^2}
  |\alp|^2=n+\rho,
\end{equation}
where
\begin{equation}\label{e:rho}
  \rho := \sum_{x\in\cR_p} \zet^x = \frac{\sqrt p-1}2
\end{equation}
is a quadratic Gaussian period (see, for instance, \cite[Chapter~3,]{b:d}).

Set $\del:=\sum_{d\in D}\zet^d$; thus,
\begin{equation}\label{e:del}
  \del = \sum_{a'\in A} \zet^{a'} \cdot \sum_{a''\in A} \zet^{\nu a''}
                                                       = \alp\phi(\alp),
\end{equation}
with $\phi\in\Gal(\K/\Q)$ defined by $\phi(\zet)=\zet^\nu$. Let
$\tau\in\Gal(\K/\Q)$ denote the complex conjugation automorphism. Since
$\Gal(\K/\Q)$ is abelian (\cite[Chapter~13, \S2,~Corollary~2]{b:ir} or
\cite[Page~18, Corollary~2]{b:m}), we have
\begin{equation}\label{e:alpsquared}
  \phi(|\alp|^2) = \phi(\alp\tau(\alp))
                           = \phi(\alp)\tau(\phi(\alp)) = |\phi(\alp)|^2.
\end{equation}
From \refe{abs^2}--\refe{alpsquared} and
  $$ \phi(\rho) = \sum_{x\in\cR_p} \zet^{\nu x}
          = \sum_{x\in\cN_p} \zet^x = -1 - \sum_{x\in\cR_p} \zet^x
                                                             = -1 - \rho, $$
we obtain
\begin{multline*}
  |\del|^2=|\alp|^2|\phi(\alp)|^2=|\alp|^2\phi(|\alp|^2) \\
                       = (n+\rho)(n-1-\rho)=\frac{n(n-1)}2
                        = |D| + \frac{n(n+1)}2 \sum_{x\in\F_p^\times} \zet^x.
\end{multline*}
Comparing this equality with
  $$ |\del|^2 = |D| + \sum_{x\in\F_p^\times} r(x)\zet^x, $$
where $r(x)$ is the number of representations of $x$ as a difference of two
elements of $D$, we conclude that $r(x)=n(n+1)/2$ for every
$x\in\F_p^\times$.
\end{proof}

We remark that the second assertion of Claim~\refm{diff} can also be proved
using the group ring approach. Namely, identifying subsets
$A,D,\cR_p,\cN_p,\F_p^\times\seq\F_p$ with the corresponding elements of the
group ring $\Z\F_p$, we have
  $$ D=AA^{(\nu)},\ AA^{(-1)}=n+\cR_p,\ \cR_p^{(\nu)}=\cN_p,
                             \ \text{and}\ \cR_p\cN_p=\frac{n(n-1)}2\,\F_p, $$
the last equality reflecting the well-known fact that for $p\equiv 1\pmod 4$,
every element of $\F_p^\times$ has exactly $\frac{p-1}4$ representations as a
sum of quadratic residue and a quadratic non-residue. Hence, we have the
chain of group ring equalities
\begin{multline*}
  DD^{(-1)} = AA^{(\nu)}A^{(-1)}A^{(-\nu)}
       = (n+\cR_p)(n+\cR_p)^{(\nu)} \\
          = (n+\cR_p)(n+\cN_p)
              = n^2 + n\F_p^\times + \frac{n(n-1)}2\,\F_p^\times
                 = n^2 + \frac{n(n+1)}2\,\F_p^\times,
\end{multline*}
proving the assertion.

We now state the part of the Semi-primitivity Theorem that is relevant for
our purposes. For co-prime integer $q,e\ge 1$, by $\<q\>_e$ we denote the
subgroup of $(\Z/e\Z)^\times$, multiplicatively generated by $q$.
\begin{theorem}[\protect{\cite[Theorem~4.5]{b:la}}]\label{t:st}
Suppose that $G$ is a finite abelian group of exponent $e$. If $G$ possesses
a $(v,k,\lam)$-difference set, then for any prime $q$ with $q\mid k-\lam$ and
$q\nmid e$, we have $-1\notin\<q\>_e$.
\end{theorem}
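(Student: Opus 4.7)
The plan is to run the standard Fourier-analytic/cyclotomic argument, but to notice that membership in a prime ideal (rather than any parity of $\fq$-adic valuation) is already enough when one combines it with Fourier inversion on $G$. Identify $D$ with $\sum_{d\in D}d\in\Z[G]$, so the difference-set identity $DD^{(-1)}=(k-\lam)\cdot 1_G+\lam G$ translates, via any non-trivial character $\chi$ of $G$ (for which $\chi(G)=0$), into $\chi(D)\overline{\chi(D)}=k-\lam$. Since $G$ has exponent $e$, all character values lie in $\cO_\K=\Z[\zet_e]$, where $\K=\Q(\zet_e)$.

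Suppose for contradiction that $-1\in\<q\>_e$. Fix a prime ideal $\fq\seq\cO_\K$ above $q$; because $q\nmid e$, the prime $q$ is unramified in $\K$. Under the canonical isomorphism $\Gal(\K/\Q)\cong(\Z/e\Z)^\times$, complex conjugation corresponds to $-1$ and the Frobenius at $\fq$ corresponds to $q$, so the decomposition group at $\fq$ equals $\<q\>_e$. The hypothesis $-1\in\<q\>_e$ therefore puts complex conjugation in the decomposition group, forcing $\bar\fq=\fq$. Now $q\mid k-\lam$ means $\chi(D)\overline{\chi(D)}\in\fq$, and primality of $\fq$ puts at least one of $\chi(D),\overline{\chi(D)}$ into $\fq$; applying complex conjugation and using $\bar\fq=\fq$ then puts the other into $\fq$ as well. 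Thus $\chi(D)\equiv 0\pmod\fq$ for \emph{every} non-trivial character $\chi$ of $G$.

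To reach a contradiction, apply Fourier inversion on $G$: letting $a_g\in\{0,1\}$ denote the indicator of $g\in D$, one has the exact identity
$$
  v\,a_g=\sum_\chi \chi(D)\,\chi(g)^{-1},\qquad g\in G,
$$
with $\chi$ ranging over all characters of $G$. Reducing modulo $\fq$, every non-trivial character contributes an element of $\fq$, while the trivial character contributes $k$; hence $v\,a_g\equiv k\pmod\fq$ for every $g\in G$. Selecting some $g\in D$ and some $g'\notin D$ (which is possible because a $(v,k,\lam)$-difference set is a proper, non-empty subset) yields $v\equiv k\pmod\fq$ and $0\equiv k\pmod\fq$, so $v\in\fq\cap\Z=(q)$; that is, $q\mid v$. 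But in a finite abelian group the exponent $e$ and the order $v$ share the same set of prime divisors, so $q\mid v$ contradicts $q\nmid e$.

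The step I expect to be the main obstacle is the Galois-theoretic self-conjugacy argument in the second paragraph — correctly identifying the decomposition group at $\fq$ with $\<q\>_e$, deducing $\bar\fq=\fq$, and then extracting $\chi(D)\in\fq$ from the weak hypothesis $q\mid k-\lam$ (as opposed to an assumption on the parity of $v_\fq(k-\lam)$) by exploiting the self-conjugacy together with primality of $\fq$. Once this is in place, the Fourier inversion and the final contradiction via $q\nmid v$ are routine.
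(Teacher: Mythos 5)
Your proof is correct. Note, however, that the paper itself offers no proof of this statement: it is quoted verbatim from Lander's book \cite[Theorem~4.5]{b:la} and used as a black box to derive Theorem~\reft{paritytest}, so there is no internal argument to compare against. What you have written is a sound, self-contained reconstruction of the standard self-conjugacy argument behind the Semi-primitivity Theorem: the character identity $\chi(D)\overline{\chi(D)}=k-\lam$ for non-trivial $\chi$; the identification of the decomposition group of a prime $\fq$ above $q$ in $\Q(\zet_e)$ with $\<q\>_e$ (legitimate because $q\nmid e$ makes $q$ unramified, so that group is generated by the Frobenius $\zet_e\mapsto\zet_e^q$); the deduction $\bar\fq=\fq$ from $-1\in\<q\>_e$, and hence $\chi(D)\in\fq$ from primality of $\fq$ together with self-conjugacy; and finally Fourier inversion to force $q\mid v$, which contradicts $q\nmid e$ via Cauchy's theorem. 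The hypothesis you rightly flag --- that $D$ is a proper, non-empty subset --- is genuinely needed (for $D=G$ one has $k-\lam=0$, every prime divides $k-\lam$, and the conclusion fails), but it is part of the standard definition of a non-degenerate difference set and is automatic in the paper's application, where $k-\lam=\frac{p-1}{4}>0$ together with $k(k-1)=\lam(v-1)$ forces $0<k<v$.
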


To deduce Theorem~\reft{paritytest} from Theorem~\reft{st}, we apply the
latter to the set $D$ of Claim~\refm{diff}. Since
  $$ n^2 - \frac{n(n+1)}2 = \frac{n(n-1)}2 = \frac{p-1}4, $$
we conclude that if $q\mid\frac{p-1}4$ is prime, then $\<q\>_p$ is an
odd-order subgroup of $\F_p^\times$; that is, $\ord_p(q)$ is odd. This proves
Theorem~\reft{paritytest}.

\section{Bi-quadratic reciprocity and the Proof of Theorem~\reft{divtest}}%
  \label{s:divtest}

The proof of Theorem~\reft{divtest} relies on Lemmermeyer's biquadratic
reciprocity law. To state it, we recall that the \emph{rational biquadratic
residue symbol} is defined for prime $p\equiv 1\pmod 4$ and quadratic residue
$b\in\cR_p$ by
  $$ \(\frac bp\)_4 = \begin{cases}
        1 &\ \text{if $b$ is a biquadratic residue modulo $p$}, \\
       -1 &\ \text{if $b$ is not a biquadratic residue modulo $p$}.
                      \end{cases} $$
Notice, that $\(b/p\)_4\equiv b^{\frac{p-1}4}\pmod p$ implies
multiplicativity of the rational biquadratic residue symbol.

For consistency, in this section we use the Legendre symbol $\(\cdot/p\)$ for
the quadratic character modulo $p$ (which was denoted $\chi_p(\cdot)$ in
Section~\refs{NoGP}, mostly for typographical reasons).

\begin{theorem}[\protect{\cite[Proposition~5.5]{b:le}}]\label{t:lemmer}
Suppose that $p\equiv 1\pmod 4$ is prime, and write $p=u^2+v^2$ with $u$ odd
and $v$ even. Suppose also that $q>2$ is a prime with $\(p/q\)=1$, and let
$c$ be an integer such that $c^2\equiv p\pmod q$. Finally, let
$q^\ast:=(-1)^{(q-1)/2}q$, so that $\(q^\ast/p\)=1$ by multiplicativity of
the Legendre symbol and the quadratic reciprocity law. Then
  $$ \( \frac{q^\ast}p\)_4
       = \begin{cases}
           \(\frac{c(v+c)}q\) &\ \text{if $q\nmid v+c$}, \\
           \(\frac 2q\) &\ \text{if $q\mid v+c$.}
         \end{cases} $$
\end{theorem}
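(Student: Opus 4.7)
The plan is to recognize $(q^*/p)_4$ as a quartic residue symbol in the Gaussian integers $\Z[i]$ and then invoke biquadratic reciprocity there. Writing $p=\pi\bar\pi$ with $\pi=u+iv$ and normalizing $\pi$ to be \emph{primary} (say $\pi\equiv 1\pmod{(1+i)^3}$) so that the quartic residue symbol $[\cdot/\pi]_4$ obeys a clean reciprocity law, the isomorphism $\Z[i]/(\pi)\cong\F_p$ gives $(q^*/p)_4\equiv (q^*)^{(p-1)/4}\equiv [q^*/\pi]_4\pmod{\pi}$. Thus evaluating the rational biquadratic symbol reduces to evaluating the Gaussian quartic symbol.

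Next I would apply the quartic reciprocity law in $\Z[i]$ to swap $[q^*/\pi]_4$ against $[\pi/q]_4$ when $q$ is inert in $\Z[i]$ (i.e.\ $q\equiv 3\pmod 4$), or against $[\pi/\lam]_4$ for a Gaussian prime $\lam\mid q$ when $q$ splits (i.e.\ $q\equiv 1\pmod 4$). The factor $q^*=(-1)^{(q-1)/2}q$ enters precisely to absorb the sign corrections arising from the reciprocity swap and to unify these two cases into a single statement, exactly paralleling the role of $q^*$ in quadratic reciprocity. To evaluate the swapped symbol I would reduce $\pi=u+iv$ modulo $q$, exploiting the hypothesis $c^2\equiv p=u^2+v^2\pmod q$: this gives $\pi\bar\pi\equiv c^2\pmod q$, which, together with the identity $(v+c)(v-c)\equiv -u^2\pmod q$, allows one to pin down $\pi$ in terms of $u$, $v$, $c$ modulo a prime above $q$. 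Translating the resulting quartic symbol over $\Z[i]/q$ back into Legendre symbols over $\F_q$ produces the claimed expression $(c(v+c)/q)$.

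The degenerate case $q\mid v+c$ is handled separately: since $q\nmid u$ (else $q\mid p$) and $(v+c)(v-c)\equiv -u^2\pmod q$, one has $q\nmid v-c$, so one substitutes $v-c$ for $v+c$ in the analogous computation and, using $v-c\equiv -u^2/(v+c)$-type manipulations modulo $q$ that fold the division by the small factor into a single symbol, simplifies the value to $(2/q)$. The main obstacle will be the sign bookkeeping around the primary-prime normalization of $\pi$ and the split/inert dichotomy for $q$ in $\Z[i]$: small errors here propagate into wrong signs in the final formula. A secondary, more routine, point is verifying that the formula is independent of the choice of sign of $c$, which follows from $((v+c)(v-c)/q)=(-u^2/q)=(-1/q)$.
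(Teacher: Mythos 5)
The paper does not actually prove the generic case of this theorem: it is quoted verbatim from Lemmermeyer \cite[Proposition~5.5]{b:le}, and the only argument the paper supplies is the one-line deduction of the degenerate case $q\mid v+c$ from the generic case (apply the generic case with $-c$ in place of $c$ and note $v-c\equiv -2c\pmod q$). Your plan of proving the generic case from scratch via the quartic residue symbol in $\Z[i]$ is the standard route to rational biquadratic reciprocity laws and is the right general idea, but as written it is a programme rather than a proof: the identification of $\left(\frac{q^\ast}{p}\right)_4$ with a quartic symbol modulo a primary $\pi=u+iv$ and the intention to swap by reciprocity are set up plausibly, yet the evaluation that actually produces the specific expression $\left(\frac{c(v+c)}{q}\right)$ --- the split/inert case analysis for $q$, the primary normalization, and the resulting sign corrections --- is exactly what you defer as ``the main obstacle.'' That bookkeeping \emph{is} the content of the theorem; without carrying it out, the proposal establishes only the shape of the answer, not the answer.

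There is also a concrete error in your treatment of the degenerate case. You assert $q\nmid u$ ``else $q\mid p$'' and then use $(v+c)(v-c)\equiv -u^2\pmod q$ to conclude $q\nmid v-c$. But $q\mid u$ does not force $q\mid p$ (only $q\mid\gcd(u,v)$ would), and in fact $q\mid v+c$ combined with $(v+c)(v-c)\equiv-u^2\pmod q$ \emph{forces} $q\mid u$, so in the degenerate case your congruence reads $0\equiv 0$ and yields nothing about $v-c$. The correct argument is the paper's: $q\nmid c$ (since $c^2\equiv p\not\equiv 0\pmod q$), so $q$ cannot divide both $v+c$ and $v-c$, whose difference is $2c$; applying the generic case to $-c$ then gives $\left(\frac{-c(v-c)}{q}\right)=\left(\frac{-c\cdot(-2c)}{q}\right)=\left(\frac{2}{q}\right)$. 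Your closing consistency check under $c\mapsto -c$ likewise relies on $\left(\frac{-u^2}{q}\right)=\left(\frac{-1}{q}\right)$, which is valid only when $q\nmid u$, i.e., precisely outside the degenerate case.
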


We remark that, strictly speaking, the case where $q\mid v+c$ is not
addressed in \refb{le}, but it is easy to deduce from the case where $q\nmid
v+c$. For, if $q\mid v+c$, then $q\nmid v-c$ in view of $q\nmid c$, and
applying then the original Lemmermeyer's theorem with $c$ replaced by $-c$,
we get
  $$ \(\frac{q^*}p\)_4 = \(\frac{-c(v-c)}q\)
                                   = \(\frac{-c(-2c)}q\) = \(\frac 2q\). $$

\begin{proof}[Proof of Theorem~\reft{divtest}]
Suppose that $p$ is a prime and $A\seq\F_p$ satisfies $A-A\=\cR_p$; thus,
$p=2n(n-1)+1$ where $n:=|A|$. From Corollary~\refc{paritytest}, we have
$p\equiv 5\pmod 8$, whence
\begin{equation}\label{e:-1Gauss}
  \(\frac{-1}p\)_4 = (-1)^{\frac{p-1}4}=-1.
\end{equation}

Let $u$ and $v$ denote the odd and the even of the two numbers $n-1$ and $n$,
respectively; notice that this is consistent with the notation of
Theorem~\reft{lemmer} as $p=(n-1)^2+n^2=u^2+v^2$. Since $p\equiv 5\pmod 8$, a
prime $q$ divides $\frac{p-1}4=\frac12uv$ if and only if it is odd and
divides either $u$, or $v$. In this case $p\equiv 1\pmod q$, and we apply
Theorem~\reft{lemmer} with $c=1$ to obtain
\begin{equation}\label{e:qstar1}
  \(\frac{q^*}p\)_4
       = \begin{cases}
           \(\frac{v+1}q\) &\ \text{if $q\nmid v+1$}, \\
           \(\frac 2q\) &\ \text{if $q\mid v+1$,}
         \end{cases}
\end{equation}
where $q^\ast:=(-1)^{(q-1)/2}q$. On the other hand, Theorem~\reft{paritytest}
shows that $q$ is a biquadratic residue modulo $p$, and therefore using
\refe{-1Gauss} we get
\begin{equation}\label{e:qstar2}
  \left(\frac{q^*}{p}\right)_4
        = \left(\frac{(-1)^{(q-1)/2}}{p}\right)_4\left(\frac{q}{p}\right)_4
            = \left(\frac{-1}{q}\right)\left(\frac{q}{p}\right)_4
                                                        = \(\frac{-1}q\).
\end{equation}
From\refe{qstar1} and \refe{qstar2},
\begin{equation}\label{e:dag1}
  \left(\frac{v+1}{q}\right)=\left(\frac{-1}{q}\right)
                                                \quad \text{if}\ q\nmid v+1,
\end{equation}
and
\begin{equation}\label{e:dag2}
  \left(\frac{2}{q}\right)=\left(\frac{-1}{q}\right)
                                                 \quad \text{if}\ q\mid v+1.
\end{equation}
If $q\mid v$, then the former of these equalities immediately gives
$q\in\{1,5\}\pmod 8$. If $q\mid u$, we distinguish two further sub-cases:
$q\mid v+1$ and $q\nmid v+1$. If $q\mid v+1$, then \refe{dag2} gives
$q\in\{1,3\}\pmod 8$. If $q\nmid v+1$, then $u\in\{v-1,v+1\}$ along with our
present assumption $q\mid u$ show that $u=v-1$; thus, $q\mid v-1$, and
\refe{dag1} leads to the same conclusion $q\in\{1,3\}\pmod 8$ as above.

We have shown that for a prime $q>2$, if $q$ divides the even of the two
numbers $n-1$ and $n$, then $q\equiv 1\pmod 8$ or $q\equiv 5\pmod 8$, and
if $q$ divides the odd of these two numbers, then $q\equiv 1\pmod 8$ or
$q\equiv 3\pmod 8$. Thus is equivalent to the assertion of
Theorem~\reft{divtest}.
\end{proof}

\section{Proof of Theorem~\reft{gcdtest}: $M_A$ lies above the order-$G_p$
  subgroup of $\F_p^\times$}\label{s:gcdtest}

In this section and the Appendix we use several basic algebraic number theory
facts, such as for instance:
\begin{itemize}
\item[i)] the Galois group of the $m$th cyclotomic field is isomorphic to
    the group of units $(\Z/m\Z)^\times$; hence, it is abelian;
\item[ii)] if $p$ and $q$ are distinct odd primes, then, letting
    $f:=\ord_p(q)$, the principal ideal $(q)$ in the $p$th cyclotomic
    field splits into a product of $(p-1)/f$ pairwise distinct prime
    ideals, all of which are fixed by the order-$f$ subgroup of the
    corresponding Galois group;
\item[iii)] Kronecker's theorem: an algebraic integer all of whose
    algebraic conjugates lie on the unit circle is a root of unity;
    consequently, any cyclotomic integer of modulus $1$ is a root of
    unity;
\item[iv)] if $m$ is odd, then the only roots of unity of the $m$th
    cyclotomic field are the roots of degree $2m$.
\end{itemize}
The proofs can be found in any standard algebraic number theory textbook, as
\refb{ir} or \refb{m}.

\begin{proof}[Proof of Theorem~\reft{gcdtest}]
Suppose that $p$ is a prime and $A\seq\F_p$ satisfies $A-A\=\cR_p$. Write
$n:=|A|$, so that $p=2n(n-1)+1$. Let $\zet$ be a primitive root of unity of
degree $p$, and denote by $\K$ the $p$\,th cyclotomic field (thus,
$\K=\Q[\zet]$), and by $\cO$ the ring of integers of $\K$. As in the proof of
Claim~\refm{diff}, write $\alp:=\sum_{a\in A}\zet^a$, so that $\alp\in\cO$
and
\begin{equation}\label{e:abs^2-2}
  |\alp|^2=n+\rho
\end{equation}
with
\begin{equation}\label{e:rho-2}
  \rho := \sum_{x\in\cR_p} \zet^x = \frac{\sqrt p-1}2.
\end{equation}

It is well known that every rational prime $q\ne p$ splits in $\cO$ into a
product of $(p-1)/\ord_p(q)$ pairwise distinct  prime ideals, all of which
are fixed by the subgroup of $\Gal(\K/\Q)$ of order $\ord_p(q)$. The
intersection of these subgroups over all primes $q\mid \frac{p-1}4$ is the
subgroup $H\le\Gal(\K/\Q)$ of order $|H|=G_p$, and since, by \refe{abs^2-2},
$\alp$ is a divisor of $n+\rho$, which in turn is a divisor of
$\frac{p-1}4=(n+\rho)(n-1-\rho)$, we conclude that the ideal generated by
$\alp$ is fixed by $H$. Hence, for every automorphism $\phi\in H$ there
exists a unit $u\in\cO$ (depending on $\phi$) such that
\begin{equation}\label{e:ast}
  \phi(\alp)=u\alp.
\end{equation}

Since $p$ is a quadratic residue modulo every odd prime $q$ dividing $p-1$,
by quadratic reciprocity, $q$ is a quadratic residue modulo $p$; that is,
$q^{\frac{p-1}2}\equiv 1\pmod p$. This shows that $\ord_p(q)$ is a divisor of
$(p-1)/2$. As a result, $G_p$ divides $(p-1)/2$; that is, $H$ is contained in
the subgroup of order $(p-1)/2$, which is easily seen to have $\Q[\sqrt p]$
as its fixed field. Therefore, re-using equality \refe{alpsquared} from the
proof of Claim~\refm{diff} and in view of \refe{abs^2-2}, for every
automorphism $\phi\in H$ we have
  $$ |\phi(\alp)|^2 = \phi(|\alp|^2) = n+\rho = |\alp|^2. $$
Comparing this with \refe{ast}, we conclude that $|u|=1$. From the fact that
$\Gal(\K/\Q)$ is abelian it follows then that all algebraic conjugates of $u$
have modulus $1$, and by Kronecker's theorem $u$ is a root of unity; thus,
either $u=\zet^v$, or $u=-\zet^v$ with some $v\in\F_p$ depending on $\phi$.
The latter option is ruled out by considering traces from $\K$ to $\Q$: we
have $\tr(\phi(\alp))=\tr(\alp)$ and $\tr(-\zet^v\alp)\equiv-\tr(\alp)\pmod
p$, while $\tr(\alp)\equiv -n\not\equiv 0\pmod p$. Therefore,
\begin{equation}\label{e:ast-refined}
  \phi(\alp)=\zet^v\alp;\quad \phi\in H,\ v=v(\phi)\in\F_p.
\end{equation}

Recalling the definition of $\alp$ and identifying $\Gal(\K/\Q)$ with
$\F_p^\times$, we can interpret \refe{ast-refined} as saying that for every
$\phi\in H<\F_p^\times$, there exists $v=v(\phi)\in\F_p$ such that the dilate
$\phi A=\{\phi a\colon a\in A\}$ satisfies $\phi A=A+v$; that is, $\phi$ is a
multiplier of $A$.
\end{proof}

\appendix
\section*{Appendix: An algebraic number theory restatement}

We aim here to pursue a little further the algebraic approach that was
employed in the proofs of Claim~\refm{diff} and Theorem~\reft{gcdtest}, in
the hope that it can ultimately give more insights into the problem. We keep
using the notation introduced in these proofs: namely, given a prime $p$, we
denote by $\zet$ a fixed primitive root of unity of degree $p$, by $\K$ the
$p$\,th cyclotomic field, by $\cO$ the ring of integers of $\K$, and we let
$\rho:=(\sqrt p-1)/2$. By $\tr$ we denote the trace function from $\K$ to
$\Q$. Our goal is to prove the two following results.

\begin{proposition}\label{p:restatement}
Let $p$ be a prime number. For a subset $A\seq\F_p$ with $A-A\=\cR_p$ to
exist, it is necessary and sufficient that $p=2n(n-1)+1$ with an integer $n$,
and that there is an algebraic integer $\alp\in\cO$ such that
$|\alp|^2=n+\rho$ and $\tr(\alp\zet^{-k})\in\{-n,p-n\}$ for every integer
$k$.
\end{proposition}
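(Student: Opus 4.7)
The plan is to verify both directions of the equivalence by passing between a subset $A$ and the exponential sum $\alp := \sum_{a \in A} \zet^a$.

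For necessity, starting from $A\seq\F_p$ with $A-A\=\cR_p$ and setting $\alp := \sum_{a \in A} \zet^a$, the computation already carried out in the proof of Claim~\refm{diff} yields $|\alp|^2 = n + \sum_{a' \ne a''} \zet^{a' - a''} = n + \rho$. For the trace, I would use the standard identities $\tr(\zet^j) = -1$ for $p \nmid j$ and $\tr(1) = p - 1$, and write $\tr(\alp\zet^{-k}) = \sum_{a \in A} \tr(\zet^{a-k})$; if $k \in A$, one summand contributes $p-1$ and the remaining $n-1$ contribute $-1$ each, totalling $p - n$, while if $k \notin A$ all $n$ summands contribute $-1$, totalling $-n$.

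For sufficiency, given $\alp \in \cO$ satisfying the two conditions, I would represent $\alp = \sum_{j \in \F_p} c_j \zet^j$ with integer coefficients $c_j$, noting that this representation is unique up to adding the same constant to every $c_j$, since $1+\zet+\cdots+\zet^{p-1}=0$ is the only $\Q$-linear relation among the $p$-th roots of unity. Writing $S := \sum_j c_j$, a direct calculation gives $\tr(\alp\zet^{-k}) = p c_k - S$, so the trace condition forces $p c_k \in \{S-n,\, S-n+p\}$ for every $k$, which in particular requires $S \equiv n \pmod p$. By subtracting $(S-n)/p$ from each $c_j$, I may assume $S = n$, whence $c_k \in \{0,1\}$ for every $k$. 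Setting $A := \{k \in \F_p : c_k = 1\}$ then yields $|A| = n$ and $\alp = \sum_{a \in A} \zet^a$, recovering the expected structural form.

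To conclude, I would derive $A-A\=\cR_p$ from the norm condition. Expanding $|\alp|^2 = n + \sum_{x \in \F_p^\times} r(x)\zet^x$, where $r(x)$ counts representations of $x$ as a difference of two elements of $A$, and combining with $|\alp|^2 = n + \rho = n + \sum_{x \in \cR_p} \zet^x$, I obtain
  $$ \sum_{x \in \cR_p} (r(x)-1) \zet^x + \sum_{x \in \cN_p} r(x) \zet^x = 0. $$
Viewing this as a $\Q$-linear relation on all $p$-th roots of unity, with coefficient $0$ at $\zet^0$, and using that the only such relation has all coefficients equal, I conclude that each $r(x)-1$ (for $x \in \cR_p$) and each $r(x)$ (for $x \in \cN_p$) must vanish, which is precisely $A-A\=\cR_p$. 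The main delicate step is the passage from the trace data to $\{0,1\}$-valued coefficients in the sufficiency direction; this hinges on the freedom to shift all $c_j$ simultaneously together with the congruence $S \equiv n \pmod p$ forced by the trace formula, while the rest of the argument is an orderly accounting of cyclotomic traces and the basic linear relation among the $p$-th roots of unity.
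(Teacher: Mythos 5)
Your proof is correct and follows essentially the same route as the paper: the trace identity $\tr(\alp\zet^{-k})=pc_k-S$ is used to force $\alp=\sum_{a\in A}\zet^a$ for an $n$-element set $A$ (the paper isolates this as Lemma~\refl{indcond}), and then $|\alp|^2=n+\rho$ combined with the fact that the only $\Q$-linear relation among the $p$th roots of unity is $1+\zet+\cdots+\zet^{p-1}=0$ yields $A-A\=\cR_p$. Your extraction of $\{0,1\}$-coefficients directly from the congruence $S\equiv n\pmod p$ is a slightly cleaner variant of the paper's ``at most two values, shift, and divide by $c$'' argument, but it is the same idea.
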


\begin{proposition}\label{p:hasse}
Let $p$ be a prime of the form $p=2n(n-1)+1$ with $n$ an integer. For an
algebraic integer $\alp\in\cO$ with $|\alp|^2=n+\rho$ to exist, it is
necessary and sufficient that for every prime $q$ dividing $p-1$ to an odd
power, the order $\ord_p(q)$ is odd.
\end{proposition}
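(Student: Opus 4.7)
The plan is to reformulate the existence of $\alpha$ as a norm-existence problem for the quadratic CM extension $\K/\K^+$ (with $\K^+:=\Q[\zeta+\zeta^{-1}]$ the maximal real subfield of $\K$) and then apply Hasse's norm theorem, the arithmetic hypothesis translating exactly into the local-norm conditions away from the ramified prime above $p$.

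For necessity, suppose $\alpha\bar\alpha=n+\rho$ and fix a prime $q\mid p-1$ with $v_q(p-1)$ odd. Quadratic reciprocity (using $p\equiv 1\pmod 4$) together with $q\mid p-1$ gives $(q/p)=1$, so $q$ splits in $\Z[\rho]$ as $\mathfrak{Q}\mathfrak{Q}'$. The elements $n+\rho$ and its Galois conjugate $n-1-\rho$ differ by $\sqrt p$ and are coprime in $\Z[\rho]$ (both are units at the unique prime above $p$), so exactly one of $\mathfrak{Q},\mathfrak{Q}'$ contains $n+\rho$, and it does so to the exact exponent $s:=v_q(p-1)$. If $f:=\ord_p(q)$ were even then $-1\in\<q\>\subset(\Z/p\Z)^\times$, so $\tau$ would lie in the decomposition subgroup of every prime $\mathfrak{P}$ of $\cO$ above $q$, forcing $\tau(\mathfrak{P})=\mathfrak{P}$. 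Picking $\mathfrak{P}$ above $\mathfrak{Q}$ and using that $q$ is unramified, we get $v_\mathfrak{P}(n+\rho)=v_\mathfrak{Q}(n+\rho)=s$; but $v_\mathfrak{P}(\alpha\bar\alpha)=v_\mathfrak{P}(\alpha)+v_{\tau(\mathfrak{P})}(\alpha)=2v_\mathfrak{P}(\alpha)$ is even, contradicting the oddness of $s$. The case $q=2$ is identical, since $v_2(p-1)$ odd forces $p\equiv 1\pmod 8$ and hence $2$ splits in $\Z[\rho]$.

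For sufficiency I first verify the ideal factorisation $(n+\rho)\cO=\mathfrak{a}\bar{\mathfrak{a}}$. Since $(n+\rho)$ is $\tau$-invariant, this is equivalent to $v_\mathfrak{P}(n+\rho)$ being even at every $\tau$-fixed prime $\mathfrak{P}$ of $\cO$. The $\tau$-fixed primes are the unique ramified prime above $p$ (where the valuation vanishes, since $(p-1)/4$ is coprime to $p$) together with the primes above rational $q\neq p$ with $\ord_p(q)$ even; for these, coprimality of $n+\rho$ and $n-1-\rho$ forces $v_\mathfrak{P}(n+\rho)\in\{0,v_q(p-1)\}$, and the hypothesis in contrapositive form gives $v_q(p-1)$ even whenever $\ord_p(q)$ is, so the parity condition holds. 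Hasse's norm theorem for the cyclic extension $\K/\K^+$ then produces $\beta\in\K^\times$ with $\beta\bar\beta=n+\rho$: the real places of $\K^+$ are handled by $p<(2n-1)^2$, making $n+\rho$ positive in every real embedding; the finite inert places are handled by the ideal decomposition above; and the unique ramified finite place above $p$ is automatic via the global reciprocity relation $\sum_v(n+\rho,\K/\K^+)_v=0$ for the quadratic character of $\K/\K^+$.

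The main obstacle, and the most delicate step, is converting $\beta\in\K^\times$ into an integer $\alpha\in\cO$. Any modification $\alpha=\beta(\gamma/\bar\gamma)$ with $\gamma\in\K^\times$ preserves the norm equation, so the task reduces to choosing $\gamma$ so that $v_\mathfrak{P}(\beta)+v_\mathfrak{P}(\gamma)-v_{\tau(\mathfrak{P})}(\gamma)\geq 0$ at every prime $\mathfrak{P}$. At each non-$\tau$-fixed orbit $\{\mathfrak{P},\tau(\mathfrak{P})\}$ the required difference $v_\mathfrak{P}(\gamma)-v_{\tau(\mathfrak{P})}(\gamma)$ lies in the nonempty interval $[-v_\mathfrak{P}(\beta),v_{\tau(\mathfrak{P})}(\beta)]$ (nonempty because $v_\mathfrak{P}(n+\rho)\geq 0$), while at $\tau$-fixed primes $v_\mathfrak{P}(\beta)\geq 0$ already follows from Step~1. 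Realising these finite-support local prescriptions by a single global $\gamma$ is exactly where the ideal $\mathfrak{a}$ is indispensable: $\mathfrak{a}$ pins down the class that $(\gamma)$ must represent, and the identity $\mathfrak{a}\bar{\mathfrak{a}}=(n+\rho)$ places $[\mathfrak{a}]$ in the image of $1-\tau$ on $\mathrm{Cl}(\K)$, supplying the cohomological slack needed for a global $\gamma$ to exist. The heart of the proof is thus not Hasse's theorem itself but the passage from ``$n+\rho$ is a norm'' to ``$n+\rho$ is an integral norm''.
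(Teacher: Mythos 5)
Your necessity argument and your local verifications for Hasse's theorem are correct and essentially reproduce the paper's proof: the parity-of-valuations argument at a $\tau$-fixed prime above $q$ is the paper's own (your exponent for $q=2$ should be $v_2(p-1)-2$ rather than $v_2(p-1)$, but the parity is unchanged), and on the sufficiency side you check the same local conditions --- total positivity at the real places, even valuation at the $\tau$-fixed finite places via the coprime factorization $\frac{p-1}4=(n+\rho)(n-1-\rho)$ and the hypothesis --- replacing only the paper's direct computation at the ramified place above $p$ (the congruence $n+\rho\equiv n^2\pmod{\sqrt p}$ together with the tame norm criterion) by the product formula for the norm-residue symbol, a perfectly legitimate variant. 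Note that the paper's proof ends exactly there: it concludes from Hasse's theorem that $n+\rho$ is a norm of an element of $\K$ and stops.

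The step you add, and call the heart of the proof, is where your argument has a genuine gap. From $\mathfrak{a}\,\tau(\mathfrak{a})=(n+\rho)$ you infer that $[\mathfrak{a}]$ lies in the image of $1-\tau$ on $\mathrm{Cl}(\K)$; what actually follows is only $[\mathfrak{a}]\in\ker(1+\tau)$, and the quotient $\ker(1+\tau)/\mathrm{im}(1-\tau)$ is the Tate cohomology group $\hat H^{-1}(\langle\tau\rangle,\mathrm{Cl}(\K))$, which has no reason to vanish for $\K=\Q(\zeta_p)$ (the minus part of the class group is typically large), so the one concrete assertion supporting this step is a false implication in general. Likewise, your reduction to prescribing the differences $v_\fP(\gamma)-v_{\tau(\fP)}(\gamma)$ needs these differences to vanish at every $\tau$-orbit outside a finite set; that is a class-group condition (one must find a principal ideal in a prescribed $(1-\tau)$-fiber, equivalently a $\tau$-invariant ideal in a prescribed ideal class), not weak approximation, and ``cohomological slack'' is not an argument for why the obstruction vanishes here. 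Consequently the passage from ``$n+\rho$ is a norm from $\K$'' to ``$n+\rho=\alpha\,\tau(\alpha)$ with $\alpha\in\cO$'' remains unproved in your write-up --- a point which, to be fair, the paper itself leaves unaddressed; everything before it is sound and matches the paper's argument.
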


To prove Proposition~\refp{restatement}, we need
\begin{lemma}\label{l:indcond}
Let $p$ be a prime and $n\in[1,p-1]$ an integer. In order for $\alpha\in\cO$
to satisfy $\tr(\alp\zet^{-k})\in\{-n,p-n\}$ for every integer $k$, it is
necessary and sufficient that $\alp=\sum_{a\in A}\zet^a$, where $A$ is an
$n$-element subset of $\F_p$.
\end{lemma}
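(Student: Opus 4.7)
The plan is to convert the trace conditions into a system of linear conditions on the coefficients of $\alp$ with respect to the spanning family $\{1,\zet,\ldots,\zet^{p-1}\}$, and then read off the desired combinatorial structure. The key computational input is the basic identity $\tr(\zet^m)=p-1$ if $p\mid m$ and $\tr(\zet^m)=-1$ otherwise; this will reduce everything to elementary arithmetic.

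\emph{Sufficiency.} If $\alp=\sum_{a\in A}\zet^a$ with $|A|=n$, then for every $k\in\Z$ one has $\alp\zet^{-k}=\sum_{a\in A}\zet^{a-k}$, and applying the trace term-by-term yields exactly one contribution of $p-1$ (when $a\equiv k\pmod p$, which happens for at most one $a$) together with $n-1$ or $n$ contributions of $-1$. Hence $\tr(\alp\zet^{-k})=p-n$ if $k\pmod p\in A$ and $\tr(\alp\zet^{-k})=-n$ otherwise.

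\emph{Necessity.} Write $\alp=\sum_{j=0}^{p-1}c_j\zet^j$ with $c_j\in\Z$; such a representation exists because $\{\zet,\ldots,\zet^{p-1}\}$ is a $\Z$-basis of $\cO$ (the coefficient $c_0$ may be chosen arbitrarily, since $1+\zet+\cdots+\zet^{p-1}=0$). For each $k\in\{0,\ldots,p-1\}$ compute
\[
  \tr(\alp\zet^{-k})
     = \sum_{j=0}^{p-1} c_j\,\tr(\zet^{j-k})
     = c_k(p-1) - \sum_{j\ne k} c_j
     = p c_k - S,
\]
where $S:=\sum_{j=0}^{p-1}c_j$; note that this quantity is independent of the ambiguity in the choice of $c_0$. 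The hypothesis $\tr(\alp\zet^{-k})\in\{-n,p-n\}$ for every $k$ then becomes $pc_k\in\{S-n,\,S+p-n\}$ for every $k$, which forces $p\mid S-n$; writing $S=n+pt$ with $t\in\Z$, this says $c_k\in\{t,t+1\}$ for every $k$.

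To finish, let $A:=\{k\in\{0,\ldots,p-1\}\colon c_k=t+1\}$, identified with a subset of $\F_p$. Summing the relation $c_k\in\{t,t+1\}$ over all $k$ gives $S=|A|(t+1)+(p-|A|)t=|A|+pt$, and comparing with $S=n+pt$ yields $|A|=n$. Finally,
\[
  \alp \;=\; \sum_{k=0}^{p-1} c_k\zet^k
        \;=\; t\sum_{k=0}^{p-1}\zet^k + \sum_{k\in A}\zet^k
        \;=\; \sum_{a\in A}\zet^a,
\]
since $\sum_{k=0}^{p-1}\zet^k=0$. Because $1\le n\le p-1$ we have indeed $A\seq\F_p$ with $|A|=n$, as required.

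The only potential pitfall is the non-uniqueness of the expansion of $\alp$ in the powers $1,\zet,\ldots,\zet^{p-1}$; as indicated above, this is resolved by observing that the quantity $pc_k-S$ is invariant under the shift $c_j\mapsto c_j+c$ that generates this ambiguity, so the whole argument goes through with no essential obstacle.
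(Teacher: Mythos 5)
Your proof is correct and follows essentially the same route as the paper: expand $\alp$ in the powers of $\zet$, compute $\tr(\alp\zet^{-k})=pc_k-S$, and conclude that the coefficients take only two adjacent integer values, one of which can be normalized away using $1+\zet+\cdots+\zet^{p-1}=0$. Your derivation that $c_k\in\{t,t+1\}$ directly from $p\mid S-n$ is a slightly cleaner way to organize the final step than the paper's gcd argument, but the underlying idea is identical.
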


\begin{proof}
It is readily seen that the condition is sufficient: if
 $\alp=\sum_{a\in A}\zet^a$ with $A\seq\F_p$ and $|A|=n$, then
  $$ \tr(\alp\zet^{-k}) =
       \begin{cases}
         -n\ &\text{if}\  k\notin A, \\
         p-n\ &\text{if}\ k\in A.
       \end{cases} $$
To prove necessity, write $\alp=\sum_{x\in\F_p} a_x\zet^x$ with integer
coefficients $a_x$. For every $k\in\Z$ we have then
  $$ \tr(\alp\zet^{-k}) = pa_k-\sum_{x\in\F_p}a_x $$
(where $k$ in the right-hand side is identified with its canonical image in
$\F_p$), and the assumption $\tr(\alp\zet^{-k})\in\{-n,p-n\}$ implies that
the coefficients $a_x$ attain at most two distinct integer values. Since
adding simultaneously the same integer to all $a_x$ does not affect the value
of the sum $\sum_{x\in\F_p}a_x\zet^x$, we can assume without loss of
generality that actually at most one value assumed by $a_x$ is distinct from
$0$; hence, writing $A:=\{x\in\F_p\colon a_x\ne 0\}$, there is an integer $c$
such that
\begin{equation}\label{e:ast26}
  \alp = c\sum_{a\in A}\zet^a.
\end{equation}
In fact, the subset $A\seq\F_p$ is proper and non-empty and $c\ne 0$, as
otherwise we would have $\alp=0$ which is inconsistent with
$\tr(\alp\zet^{-k})\in\{-n,p-n\}$. Consequently, \refe{ast26} implies that
$\tr(\alp\zet^{-k})$ assumes exactly two distinct values, both divisible by
$c$. Observing, on the other hand, that $\gcd(-n,p-n)=\gcd(n,p)=1$, we
conclude that $c\in\{-1,1\}$. Replacing now $A$ with its complement in
$\F_p$, if necessary, we can assume that, indeed, $c=1$ holds. Thus,
$\alp=\sum_{a\in A}\zet^a$, and it remains to notice that this yields
$\tr(\alp\zet^{-k})\in\{-|A|,p-|A|\}$, whence $|A|=n$.
\end{proof}

\begin{proof}[Proof of Proposition~\refp{restatement}]
We know from Lemma~\refl{indcond} (see also the proofs of Claim~\refm{diff}
and Theorem~\reft{gcdtest}) that if $A-A\=\cR_p$ for a subset $A\seq\F_p$
then, writing $n:=|A|$ and $\alp:=\sum_{a\in A}\zet^a$, we have
$p=2n(n-1)+1$, $|\alp|^2=n+\rho$, and $\tr(\alp\zet^{-k})\in\{-n,p-n\}$ for
every integer $k$.

Conversely, suppose that $p=2n(n-1)+1$ and that for some $\alp\in\cO$ we have
$|\alp|^2=n+\rho$ and $\tr(\alp\zet^{-k})\in\{-n,p-n\}$ for every integer
$k$. By Lemma~\refl{indcond}, there is an $n$-element subset $A\seq\F_p$ such
that $\alp=\sum_{a\in A}\zet^a$. Hence,
  $$ \sum_{x\in\cR_p} \zet^x = \rho = |\alp|^2 - n
             = \sum_{\substack{a',a''\in A\\a'\ne a''}} \zet^{a'-a''}, $$
implying $A-A\=\cR_p$.
\end{proof}

\begin{proof}[Proof of Proposition~\refp{hasse}]
Consider a prime divisor $q$ of $p-1$ and denote by $v$ the power to which
$q$ divides $(p-1)/4$; thus, $v$ is either equal, or smaller by $2$ than the
power to which $q$ divides $p-1$. Since $p\equiv 1\pmod q$ and, consequently,
$p$ is a square mod $q$, if $q$ is odd, then it splits into two ideal primes
in $\Q(\sqrt p)$. This conclusion stays true also if $q=2$ and $v>0$: for, in
this case $p\equiv 1\pmod 8$ (see, for instance,
 \cite[Propositions~13.1.3 and~13.1.4]{b:ir} or
\cite[Chapter~3, Theorem~25]{b:m}). Now the decomposition
  $$ \frac{p-1}4=(n+\rho)(n-1-\rho) $$
and the fact that $n+\rho$ and $n-1-\rho$ are co-prime elements of
 $\Q(\sqrt p)$ show that the $v\,$th power of one of the two ideal primes
into which $q$ splits divides $n+\rho$, while the $v$\,th power of another
one divides $n-1-\rho$. Denote by $\fq$ the prime whose $v$\,th power divides
$n+\rho$; we thus have $(n+\rho)=\fq^v\fI$, where $\fI<\cO$ is an ideal
co-prime with $q$.

Write $f:=\ord_p(q)$, so that $q$ splits into $(p-1)/f$ pairwise distinct
ideal primes in $\cO$ and, accordingly, $\fq$ splits into $k:=(p-1)/(2f)$
pairwise distinct ideal primes: $\fq=\fq_1\ldots\fq_k$, where each $\fq_i$ is
stable under the subgroup $H<\Gal(\Q/\K)$ of order $f$. Assuming
$|\alp|^2=n+\rho$ and observing that $|\alp|^2=\alp\tau(\alp)$, where $\tau$
is the complex conjugation automorphism of $\K$, we thus have
\begin{equation}\label{e:qI}
  (\alp)\tau((\alp)) = \fq_1^v\ldots\fq_k^v\, \fI.
\end{equation}

Suppose now that $f$ is even, so that $\tau\in H$ and, consequently,
$\tau(\fq_i)=\fq_i$ for each $i\in[1,k]$. Comparing this with \refe{qI} we
conclude that the factor $\fq_i^v$ in its right-hand side must split evenly
between the two factors $(\alp)$ and $\tau((\alp))$; therefore, $v$ must be
even. This proves necessity.

To prove sufficiency we invoke the Hasse norm theorem
\cite[Theorem~V.4.5]{b:j} which says that if $K$ is a cyclic extension of a
number field $L$, then an element of $L$ is the norm (from $K$ to $L$) of an
element of $K$ if and only if it is a norm locally everywhere. The reader
will see that, in fact, the theorem also gives necessity; however, we prefer
to keep the simple ``elementary'' argument presented above.

Specified to our situation, Hasse's theorem gives the following. Let $\K^+$
be the real subfield of $\K$. For a prime ideal $\fp\subset\K^+$, denote by
$\K_\fp^+$ the completion of $\K^+$ at $\fp$, and by $\K_\fp$ the
corresponding completion of $\K$; thus, $\K_\fp=\K\K_\fp^+$. Then, according
to the Hasse theorem, $n+\rho$ is a norm from $\K$ to $\K^+$ if and only if
it is a norm from $\K_\fp$ to $\K_\fp^+$ for every prime $\fp$ of $\K^+$,
including the infinite primes.

Accordingly, let $\fp\subset\K^+$ be a prime. We first show that $n+\rho$ is
always a norm from $\K_\fp$ to $\K_\fp^+$ whenever $\fp\nmid\frac{p-1}{4}$.
For notational convenience, we write below $\Krt:=\Q(\sqrt{p})$.

If $\fp$ is an infinite prime, then it is a real prime and $\K_\fp^+$ is the
field $\R$ of real numbers, as $\K^+$ is totally real. Furthermore, every
real square, hence every positive real number, and in particular $n+\rho$, is
a norm from the quadratic extension $\K_\fp=\C$.

If $\fp$ is a finite prime dividing $p$, then it is unique with this
property, and $p$ is totally and tamely ramified in $\K$. Thus the extension
$\K_\fp/\K_\fp^+$ is a tamely ramified quadratic extension. Since $n+\rho$ is
not divisible by $\fp$, it is a unit in $\K_\fp^+$, so by
 \cite[Chapter~V, \S3, Proposition~5]{b:se} is a norm from  $\K_\fp$ if and
only if it is a square modulo $\fp$. As the residue field of $\K_\fp$ modulo
$\fp$ is $\F_p$, this is equivalent to $n+\rho$ being a square modulo the
uniformizer $\sqrt p$ of $\Krt\Q_p$, where $\Q_p$ is the field of $p$-adic
rationals, i.e.\ the completion of $\Q$ at $p$. Now
 $n+\rho\equiv n-\frac12\pmod{\sqrt p}$, with the congruence in (a
localization of) the ring of integers of $\Krt$. At the same time,
$p=2n(n-1)+1$ implies $n-\frac12\equiv n^2\pmod p$. It follows that
 $n-\frac12\equiv n^2\pmod{\sqrt p}$, hence $n+\rho\equiv n^2\pmod{\sqrt p}$,
and so $n+\rho\equiv n^2\pmod\fp$.

Finally, if $\fp$ is a finite prime not dividing $p$ (and also not dividing
$\frac{p-1}4$), then the extension $\K_\fp/\K_\fp^+$ is unramified, in which
case every unit of $\K_\fp^+$ is a norm from $\K_\fp$
 \cite[Chapter~V, \S2, Corollary to Proposition~3]{b:se}. But $n+\rho$ is a
unit of $\K_\fp^+$, as follows from the observation that
$N_{\Krt/\Q}(n+\rho)=\frac{p-1}4$ is not divisible by $\fp$.

We have thus shown that $n+\rho$ is always a norm from $\K_\fp$ to $\K_\fp^+$
whenever $\fp\nmid\frac{p-1}4$, and it remains to determine when $n+\rho$ is
a norm for the primes $\fp\mid\frac{p-1}4$. Fix such a prime $\fp\seq\K^+$,
and let $\fq$ be the prime in $\Krt$ lying below $\fp$, and $q$ be the
rational prime lying below $\fp$ and $\fq$. Also, let $\fq'$ be the conjugate
of $\fq$ over $\Q$; since $q$ splits into two primes in $\Krt$ (see the very
beginning of the proof for the explanation), we have the prime factorization
$q\cO_\Krt=\fq\fq'$.

Let $v_\fp,v_\fq,v_{\fq'}$, and $v_q$ be the valuations on $\K^+,\Krt,\Krt$,
and $\Q$, corresponding to $\fp,\fq,\fq'$, and $q$, respectively. Since $q$
is unramified in $\K$ (the only ramified prime in $\K$ is $p$), we may assume
that all these valuations are normalized; that is, their value groups are
$\Z$.

Trivially, $n+\rho$ is a norm from $\K_\fp$ to $\K_\fp^+$ if
$\K_\fp=\K_\fp^+$. This happens if and only if $\fp$ splits completely in
$\K$; that is, if and only if the complex conjugation automorphism $\tau$
does not lie in the decomposition group of a prime $\fP\subset\K$ lying above
$\fp$. Since the Galois group $\Gal(\K/\Q)$ is cyclic, $\tau$ is its unique
involution. Hence for $\K_\fp=\K_\fp^+$ to hold it is necessary and
sufficient that the decomposition group of $\fP$ has odd order; equivalently,
the inertia degree of $q$ in $\K/\Q$ is odd; that is, the order $\ord_p(q)$
is odd. Thus, if $\ord_p(q)$ is odd, then $n+\rho$ is a norm from $\K_\fp$ to
$\K_\fp^+$.

To complete the proof, we show that for $\ord_p(q)$ even, $n+\rho$ is a norm
from $\K_\fp$ to $\K_\fp^+$ if and only if $v_q(\frac{p-1}4)$ is also even.
So assume now that $\ord_p(q)$ is even. Since $\K_\fp/\K_\fp^+$ is an
unramified quadratic extension, by
 \cite[Chapter~V, \S2, Corollary to Proposition~3]{b:se}, the group of norms
from $\K_\fp$ to $\K_\fp^+$ inside $(\K_\fp^+)^\times$ is
 $\langle\pi_\fp^2\rangle\times U_{\K_\fp^+}$, where $\pi_\fp$ is a
uniformizer of $\K_\fp^+$ (i.e. $v_\fp(\pi_\fp)=1$) and $U_{\K_\fp^+}$ is the
unit group of $\K_\fp^+$. Thus, $n+\rho$ is a norm from $\K_\fp$ to
$\K_\fp^+$ if and only if $v_\fp(n+\rho)$ is even. Let
$\rho':=\frac{-\sqrt{p}-1}2$ be the conjugate of $\rho$ over $\Q$. Observe
that
  $$ 0=v_q(2n-1)=v_\fq(2n-1)=v_\fq(n+\rho+n+\rho')\ge
                                     \min\{v_\fq(n+\rho),v_\fq(n+\rho')\} $$
implies
\begin{equation}\label{e:Hasse1}
  \min\{v_\fq(n+\rho),v_\fq(n+\rho')\}=0,
\end{equation}
and also that
\begin{equation}\label{e:Hasse2}
  \textstyle
  v_q(\frac{p-1}{4})=v_{\fq}(\frac{p-1}{4})=v_{\fq}((n+\rho)(n+\rho'))
                                         =v_{\fq}(n+\rho)+v_{\fq}(n+\rho').
\end{equation}

If $v_q(\frac{p-1}4)$ is odd, then either $v_\fq(n+\rho)$ is odd, or
$v_\fq(n+\rho')=v_{\fq'}(n+\rho)$ is odd; hence, either $n+\rho$ is not a
norm from $\K_\fp$ to $\K_\fp^+$, or it is not a norm from $\K_{\fp'}$ to
$\K_{\fp'}^+$ for some prime $\fp'$ of $\K^+$ lying above $\fq'$. It follows
that if $v_q(\frac{p-1}4)$ is odd, then $n+\rho$ is not a norm from $\K$ to
$\K^+$. On the other hand, if $v_q(\frac{p-1}4)$ is even, then by
\refe{Hasse1} and \refe{Hasse2}, $v_\fq(n+\rho)$ is also even and, similarly,
$v_{\fq'}(n+\rho)=v_\fq(n+\rho')$ is even. Therefore if $v_q(\frac{p-1}4)$ is
even, then $n+\rho$ is a norm from $\K_\fp$ to $\K_\fp^+$ for all $\fp$ lying
above $q$.

This completes the proof.
\end{proof}

\vfill

\bigskip

\end{document}